\NewDocumentCommand{\INTERVALINNARDS}{ m m }{
    #1 {,} #2
}
\NewDocumentCommand{\interval}{ s m >{\SplitArgument{1}{,}}m m o }{
    \IfBooleanTF{#1}{
        \left#2 \INTERVALINNARDS #3 \right#4
    }{
        \IfValueTF{#5}{
            #5{#2} \INTERVALINNARDS #3 #5{#4}
        }{
            #2 \INTERVALINNARDS #3 #4
        }
    }
}
\newtheorem{theorem}{Theorem}[section]
\newtheorem{lemma}[theorem]{Lemma}
\newenvironment{proof}[1][Proof]{\textbf{#1.} }
{\ \rule{0.75em}{0.75em}\smallskip}
\newcommand{\bphi}{\boldsymbol{\phi}}
\newcommand{\bvarphi}{\boldsymbol{\varphi}}
\newcommand{\bnu}{\boldsymbol{\nu}}
\newcommand{\btau}{\boldsymbol{\tau}}
\newcommand{\bsigma}{\boldsymbol{\sigma}}
\newcommand{\bSigma}{\boldsymbol{\Sigma}}
\newcommand{\bI}{\boldsymbol{I}}
\newcommand{\bzero}{\boldsymbol{0}}
\newcommand{\bv}{\boldsymbol{v}}
\newcommand{\bu}{\boldsymbol{u}}
\newcommand{\bn}{\boldsymbol{n}}
\newcommand{\br}{\boldsymbol{r}}
\newcommand{\bw}{\boldsymbol{w}}
\begin{document}

\begin{center}
\Large\bf $C^0$ Discontinuous Galerkin Methods for a

Kirchhoff Plate Contact Problem
\end{center}

\begin{center}
Fei Wang\footnote{Department of Mathematics, Pennsylvania State University,
University Park, PA 16802, USA.  Email: wangfeitwo@163.com}, \quad 
Tianyi Zhang\footnote{Program in Applied Mathematical and Computational Sciences, 
University of Iowa, Iowa City, IA 52242, USA. Email: tianyi-zhang@uiowa.edu},
\quad and \quad
Weimin Han\footnote{Department of Mathematics \& Program in
Applied Mathematical and Computational Sciences, University of
Iowa, Iowa City, IA 52242, USA. Email: weimin-han@uiowa.edu}

\end{center}

\bigskip
\begin{quote}
{\bf Abstract.} Discontinuous Galerkin (DG) methods are considered for solving a plate
contact problem, which is a 4th-order elliptic variational inequality of second kind.
Numerous $C^0$ DG schemes for the Kirchhoff plate bending problem are extended to the 
variational inequality. Properties of the DG methods, such as consistency and stability, 
are studied, and optimal order error estimates are derived.  A numerical
example is presented to show the performance of the DG methods;
the numerical convergence orders confirm the theoretical prediction.

{\bf Keywords.} Variational inequality of 4th-order, discontinuous Galerkin
method, plate frictional contact problem, error estimation

{\bf AMS Classification.} 65N30, 49J40
\end{quote}

\section{Introduction}

In this paper, we introduce and analyze some $C^0$ discontinuous Galerkin (DG) methods
for a model 4th-order elliptic variational inequality of second kind.  The model
variational inequality arises in the study of a frictional contact problem for Kirchhoff plates.

\subsection{Discontinuous Galerkin methods}

Discontinuous Galerkin methods are an important family of
nonconforming finite element methods for solving partial differential equations.
We refer to \cite{cockburn00} for a historical
account about DG methods. Discontinuous Galerkin methods use
piecewise smooth yet globally less smooth functions to approximate
problem solutions, and relate the information between two neighboring
elements by numerical traces. The practical interest
in DG methods is due to their flexibility in mesh design and
adaptivity, in that they allow elements of arbitrary shapes,
irregular meshes with hanging nodes, and the discretionary local
shape function spaces. In addition, the increase of the locality
in discretization enhances the degree of parallelizability.

There are basically two approaches to construct DG methods for linear elliptic boundary value problems.
The first approach is through the choice of an appropriate
bilinear form that contains penalty terms to penalize jumps across
neighboring elements to make the scheme stable. The second approach is based on
choosing appropriate numerical fluxes to make the method
consistent, conservative and stable. In \cite{arnold00} and
\cite{arnold02}, Arnold, Brezzi, Cockburn, and Marini provided a
unified error analysis of DG methods for linear elliptic boundary value problems
of 2nd-order and succeeded in building a bridge between these two
families, establishing a framework to understand their properties,
differences and the connections between them. In \cite{wang10},
numerous DG methods were extended for solving elliptic variational
inequalities of 2nd-order, and a priori error estimates were
established, which are of optimal order for linear elements. 
In \cite{wang11}, five discontinuous Galerkin schemes with linear elements for 
solving the Signorini problem were studied, and optimal convergence order was proved. 
The ideas presented in \cite{wang11} were extended to solve a quasistatic contact
problem in \cite{wang14}.

In this paper, we study DG methods to solve an elliptic variational
inequality of 4th-order for the Kirchhoff plates. It is difficult
to construct stable DG methods for such problems because of the higher order
in differentiation and of the inequality form. The major known DG methods for 
the biharmonic equation in the literature are primal DG methods, namely
variations of interior penalty (IP) methods
(\cite{babuska73,baker77,brenner05,engel02,mozolevski3,mozolevski1,mozolevski2,suli07}).
Fully discontinuous IP methods, which cover meshes with hanging
nodes and locally varying polynomial degrees, thus ideally suited for
$hp$-adaptivity, were investigated systematically in
\cite{mozolevski3,mozolevski1,mozolevski2,suli07} for biharmonic
problems. In \cite{engel02}, a $C^0$ IP formulation was introduced
for Kirchhoff plates and quasi-optimal error estimates were
obtained for smooth solutions. Unlike fully discontinuous Galerkin
methods, $C^0$ type DG methods do not ``double" the degrees of
freedom at element boundaries. A rigorous error
analysis was presented in \cite{brenner05} for the $C^0$ IP method
under weak regularity assumption on the solution. A weakness of this
method is that the penalty parameter can not be precisely
quantified {\it a priori}, and it must be chosen suitably large to
guarantee stability. However, a large penalty parameter has a
negative impact on accuracy. Based on this observation, a $C^0$ DG
(CDG) method was introduced in \cite{wells07}, where the stability
condition can be precisely quantified. In \cite{huang10}, a
consistent and stable CDG method, called the LCDG method, was
derived for the Kirchhoff plate bending problem.  The LCDG method can be viewed
as an extension of the LDG method studied in \cite{castillo00,cockburn03}.
We will extend these three methods and additionally propose two more
CDG methods to solve the 4th-order elliptic
variational inequality of second kind.  For 4th-order elliptic
variational inequalities of first kind, some DG methods were developed
in \cite{wang13}; however, no error estimates were derived.
In \cite{brenner12}, a quadratic $C^0$ IP method for Kirchhoff plates problem 
with the displacement obstacle was studied, and errors in the energy norm and 
the $L^{\infty}$ norm are given by $O(h^\alpha)$, where $0.5<\alpha\leq 1$.

\subsection{Kirchhoff plate bending problem}

Let $\Omega\subset\mathbb{R}^{2}$ be a bounded polygonal domain
with boundary $\Gamma$. The boundary value problem of a clamped
Kirchhoff plate under a given scaled vertical load $f\in
L^2(\Omega)$ is (cf.\ \cite{reddy07})
\begin{equation}
\left\{
\begin{aligned}
& \bsigma=-(1-\kappa)\nabla ^{2}u-\kappa\,
{\rm tr}(\nabla ^{2}u)\bI\;\;\text{in }\Omega,\\
&-\nabla\cdot(\nabla\cdot\bsigma)=f
\;\;\text{in }\Omega, \\
&u=\partial_{\bnu}u=0\;\;\text{on }\Gamma,
\end{aligned}
\right.  \label{problem2}
\end{equation}
where $\kappa\in(0,0.5)$ denotes the Poisson ratio of an elastic thin plate occupying the
region $\Omega$ and $\bnu$ stands for the unit outward normal vector
on $\Gamma$. $\bI$ is the identity matrix of order $2$ and ${\rm tr}$ is
the trace operation on matrices. Here, $\nabla$ is the usual nabla operator, and
we denote the Hessian of
$v$ by $\nabla^{2}v$, i.e.,
\[ \nabla ^{2}v:=\nabla (\nabla v)
  =\nabla ((\partial _{1}v,\partial_{2}v)^{t})=\left(
\begin{array}{cc}
\partial _{11}v & \partial _{12}v \\
\partial _{21}v & \partial _{22}v%
\end{array}%
\right).
\]
Note that the first equation in \eqref{problem2} can be rewritten as
\begin{equation}\label{moment}
\frac{1}{1-\kappa}\bsigma-\frac{\kappa}{1-\kappa^2}({\rm tr}\bsigma)
   \bI=-\nabla ^{2}u.
\end{equation}

For a vector-valued function
$\bv=(v_1,v_2)^t$ and a matrix-valued function
$\bsigma=(\sigma_{ij})_{2\times2}$, we define their divergence by
\[ \nabla\cdot\bv:=v_{1,1}+v_{2,2},\quad
  \nabla\cdot\bsigma:=(\sigma_{11,1}+\sigma_{21,2},\sigma_{12,1}+\sigma_{22,2})^t. \]
We denote the normal and tangential components of a vector $\bv$ on
the boundary by $v_{\nu}=\bv\cdot\bnu$ and
$\bv_{\tau}=\bv-v_{\nu}\bnu$. Similarly, for a tensor $\bsigma$, we
define its normal component $\sigma_{\nu}=\bsigma\bnu\cdot\bnu$ and
tangential component $\bsigma_{\tau} =\bsigma\bnu-\sigma_{\nu}\bnu$.
We have the decomposition formula
\[ (\bsigma\bnu)\cdot\bv=
  (\sigma_{\nu}\bnu+\bsigma_{\tau})\cdot(v_{\nu}\bnu+\bv_{\tau})
   =\sigma_\nu v_\nu+\bsigma_\tau\cdot\bv_\tau. \]
For two matrices $\btau$ and $\bsigma$, their double dot inner
product and corresponding norm are
$\bsigma:\btau=\sum^2_{i,j=1}\sigma_{ij}\tau_{ij}$ and
$|\btau|=(\btau:\btau)^{1/2}$.

The following result is very useful for the analysis of DG
methods, which can be verified directly through integration by parts.

\begin{lemma}  \label{identities}
Let $D$ be a bounded domain with a Lipschitz boundary $\partial D$.
For a symmetric matrix-valued function $\btau$ and a scalar function
$v$, the following two identities hold
\begin{align*}
\int_{D}v\,\nabla\cdot(\nabla\cdot\btau)\,dx
  & =\int_{D}\nabla^{2}v:\btau\,dx
 -\int_{\partial D}\nabla v\cdot(\btau \bn)\,ds
 +\int_{\partial D}v\,\bn\cdot(\nabla\cdot\btau)\,ds,\\
\int_{D}\nabla^{2}v:\btau\,dx& =-\int_{D}\nabla
v\cdot(\nabla\cdot\btau)\,dx +\int_{\partial D}\nabla v\cdot(\btau \bn)\,ds,
\end{align*}
whenever the terms appearing on both sides of the above identities
make sense. Here $\bn$ is the unit outward normal to $\partial D$.
\end{lemma}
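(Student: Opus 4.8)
The plan is to establish both identities by direct integration by parts, proving the second (lower-order) identity first and then bootstrapping it into the first.

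First I would prove the second identity. Writing the double-dot product in components as $\nabla^{2}v:\btau=\sum_{i,j}\partial_{ij}v\,\tau_{ij}$, I would integrate by parts in the $i$-th variable on each term, moving the outer derivative off of $\partial_j v$. This produces a volume contribution $-\sum_{i,j}\partial_j v\,\partial_i\tau_{ij}$ together with a boundary contribution $\sum_{i,j}\partial_j v\,\tau_{ij}n_i$. The key bookkeeping step is to recognize, using the column-divergence convention for matrices fixed in the text, that $\sum_i\partial_i\tau_{ij}=(\nabla\cdot\btau)_j$, so the volume term collapses to $-\nabla v\cdot(\nabla\cdot\btau)$, and that $\sum_i\tau_{ij}n_i=(\btau\bn)_j$, where the symmetry $\tau_{ij}=\tau_{ji}$ is precisely what makes the boundary term equal $\nabla v\cdot(\btau\bn)$ rather than a transposed contraction. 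Assembling these gives the second identity.

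Next I would derive the first identity from the second. Setting $\bw=\nabla\cdot\btau$, I would apply the scalar divergence theorem $\int_D v\,\nabla\cdot\bw\,dx=-\int_D\nabla v\cdot\bw\,dx+\int_{\partial D}v\,\bw\cdot\bn\,ds$ to the left-hand side $\int_D v\,\nabla\cdot(\nabla\cdot\btau)\,dx$. This yields the interior term $-\int_D\nabla v\cdot(\nabla\cdot\btau)\,dx$ and the boundary term $\int_{\partial D}v\,\bn\cdot(\nabla\cdot\btau)\,ds$. I would then substitute the already-proven second identity, rearranged as $-\int_D\nabla v\cdot(\nabla\cdot\btau)\,dx=\int_D\nabla^{2}v:\btau\,dx-\int_{\partial D}\nabla v\cdot(\btau\bn)\,ds$, into that interior term. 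Combining the pieces produces exactly the first identity.

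I do not anticipate any serious obstacle, since everything reduces to repeated application of the divergence theorem; the proof is essentially an exercise in careful indexing. The only points requiring attention are the consistent use of the column-divergence convention $(\nabla\cdot\btau)_j=\sum_i\partial_i\tau_{ij}$ and the appeal to symmetry of $\btau$ to identify the boundary contraction with $\btau\bn$. The regularity caveat in the statement is handled by assuming $v$ and $\btau$ are smooth enough that each integration by parts is legitimate (for instance $v\in H^2(D)$ and $\btau$ with components admitting the stated second-order divergences), and then invoking a density argument if the identities are needed under weaker hypotheses.
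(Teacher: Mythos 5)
Your proof is correct and follows exactly the route the paper intends: the paper offers no written proof beyond the remark that the identities ``can be verified directly through integration by parts,'' and your componentwise integration by parts (using the column-divergence convention and the symmetry of $\btau$ for the boundary contraction), followed by the divergence theorem applied to $\bw=\nabla\cdot\btau$ to bootstrap the first identity from the second, is precisely that verification.
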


Multiplying the second equation in (\ref{problem2}) by a test
function $v \in H^2_0(\Omega)$ and noticing $v=\partial_{\bnu}v=0$,
we get the following equation by Lemma \ref{identities},
\begin{equation}\label{weak_problem1}
-\int_\Omega \bsigma: \nabla ^{2}v \,dx=\int_\Omega fv \,dx.
\end{equation}
By the definition of $\bsigma$ and (\ref{weak_problem1}), the weak
formulation of problem (\ref{problem2}) can be written as
\begin{equation}
{\rm Find}\, u\in H^2_0(\Omega) :\quad a(u,v)=
(f,v)\quad\forall\,v\in H^2_0(\Omega), \label{weak_problem2}
\end{equation}
where the bilinear form is
\begin{equation}
a(u,v)=\int_\Omega \big[ \Delta u\,\Delta v+(1-\kappa)\,
 (2\,\partial_{12}u\,\partial_{12}v-\partial_{11}u\,
 \partial_{22}v-\partial_{22}u\,\partial_{11}v)\big]\,dx,
\label{bilinear}
\end{equation}
 and the linear form is
\[ (f,v)=\int_\Omega f\,v\,dx. \]

In this paper, we consider a plate frictional contact problem, which is a
4th-order elliptic variational inequality (EVI) of second kind (\cite{duvaut76}).
The Lipschitz continuous boundary $\Gamma$ of the domain $\Omega$ is decomposed into three parts:
$\overline{\Gamma_1}$, $\overline{\Gamma_2}$ and $\overline{\Gamma_3}$ with
$\Gamma_1$, $\Gamma_2$ and $\Gamma_3$ relatively open and mutually
disjoint such that ${\rm meas}(\Gamma_1)>0$.
Then the plate frictional contact problem we consider is:
\begin{equation}
{\rm Find}\ u\in V:\quad a(u,v-u) + j(v) - j(u) \ge (f,v-u)\quad\forall\,v\in V.
\label{EVI}
\end{equation}
Here,
\begin{align*}
V &=\left\{v\in H^2(\Omega):\,v=\partial_{\bnu} v=0\ {\rm on\ }\Gamma _1\right\},\label{V_sp}\\
j(v) &= \int_{\Gamma_3} g\, |v|\, ds.
\end{align*}
This variational inequality
describes a simply supported plate. The plate is clamped on the boundary $\Gamma_1$:
\begin{equation}
    v=\partial_{\bnu} v = 0\ {\rm on\ }\Gamma _1, \label{boundary_1}
\end{equation}
is free on $\Gamma_2$, and is in frictional contact on $\Gamma_3$ with
a rigid foundation; $g$ can be interpreted as a frictional bound.  Applying
the standard theory on elliptic variational inequalities (e.g., \cite{atkinson09, glowinski84}),
we know the problem (\ref{EVI}) has a unique solution $u\in V$.

Let
\[\Lambda=\{\lambda\in L^\infty(\Gamma_3): |\lambda|\leq 1 \ {\rm a.e.\ on}\ \Gamma_3\}.\]
We have the following result (\cite{han02}).

\begin{theorem}
A function $u\in V$ is a solution of \eqref{EVI} if and only if there is a $\lambda\in \Lambda$ such that
\begin{align}
    a(u,v)+\int_{\Gamma_3} g\ \lambda\ v\, ds &= (f,v)\quad   \forall\, v\in V, \label{Lar1}\\
    \lambda\ u &= |u|\quad {\rm a.e. \ on}\ \Gamma_3.\label{Lar2}
\end{align}
\end{theorem}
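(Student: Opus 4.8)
The plan is to prove the two implications separately. Throughout I would use that the frictional bound satisfies $g\ge 0$ on $\Gamma_3$ and that the trace map $V\hookrightarrow L^2(\Gamma_3)$ is bounded, so that $j$ is a finite continuous convex functional on $V$ and $w\mapsto\int_{\Gamma_3} g\,|w|\,ds$ defines a continuous seminorm on $V$. The key device is to read \eqref{EVI} as a domination estimate on a single linear functional and then recover $\lambda$ by a duality argument; no explicit subdifferential machinery is needed.

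Sufficiency is the routine direction. Assuming $\lambda\in\Lambda$ satisfies \eqref{Lar1}--\eqref{Lar2}, I would insert the admissible test function $v-u$ into \eqref{Lar1} to obtain $a(u,v-u)-(f,v-u)=-\int_{\Gamma_3} g\,\lambda\,(v-u)\,ds$. Substituting this into the left-hand side of \eqref{EVI} and using \eqref{Lar2} to replace $\lambda u$ by $|u|$ reduces the claim to $\int_{\Gamma_3} g\,(|v|-\lambda v)\,ds\ge 0$, which holds because $g\ge 0$ and $\lambda v\le|\lambda|\,|v|\le|v|$ pointwise a.e.\ on $\Gamma_3$.

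For necessity, assume $u$ solves \eqref{EVI}, and define the bounded linear functional $\ell\in V^{*}$ by $\langle\ell,w\rangle=(f,w)-a(u,w)$. Choosing $v=u\pm w$ in \eqref{EVI} and using the triangle inequality $|u\pm w|\le|u|+|w|$ gives the two-sided bound $|\langle\ell,w\rangle|\le\int_{\Gamma_3} g\,|w|\,ds$ for all $w\in V$. This domination is the heart of the argument: it lets me define a functional $T$ on the subspace $\{g\,w:w\in V\}\subset L^1(\Gamma_3)$ by $T(g\,w)=\langle\ell,w\rangle$ --- well defined precisely because the bound forces $\langle\ell,w\rangle=0$ whenever $g\,w=0$ --- with operator norm at most one. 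Extending $T$ to all of $L^1(\Gamma_3)$ by Hahn--Banach and invoking $L^1$--$L^\infty$ duality yields $\lambda\in L^\infty(\Gamma_3)$ with $\|\lambda\|_{L^\infty(\Gamma_3)}\le 1$, i.e.\ $\lambda\in\Lambda$, such that $\langle\ell,w\rangle=\int_{\Gamma_3} g\,\lambda\,w\,ds$; unwinding the definition of $\ell$ is exactly \eqref{Lar1}.

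It remains to upgrade $\lambda u\le|u|$ to the equality \eqref{Lar2}. I would feed \eqref{Lar1} back into \eqref{EVI}: testing \eqref{Lar1} against $v-u$ and inserting into \eqref{EVI} gives $\int_{\Gamma_3} g\,(|v|-|u|-\lambda v+\lambda u)\,ds\ge 0$ for every $v\in V$. Taking the admissible choices $v=0$ and $v=2u$ isolates $\int_{\Gamma_3} g\,(\lambda u-|u|)\,ds\ge 0$ together with $\int_{\Gamma_3} g\,(|u|-\lambda u)\,ds\ge 0$, so the integral vanishes; since the integrand $g\,(|u|-\lambda u)$ is pointwise nonnegative, it is zero a.e., giving $\lambda u=|u|$ a.e.\ on $\{g>0\}$. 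On $\{g=0\}$ the value of $\lambda$ is irrelevant to \eqref{Lar1}, so I may redefine $\lambda:=\mathrm{sgn}(u)$ there to secure \eqref{Lar2} on all of $\Gamma_3$. The main obstacle is the representation step: verifying that $T$ is well defined and bounded on the (generally non-closed) subspace of $L^1(\Gamma_3)$, and that the Hahn--Banach extension is realized by a genuine $L^\infty$ density paired against the $L^2$ trace of $w$.
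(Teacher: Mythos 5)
Your argument is correct, and in fact the paper offers no proof of this theorem at all --- it is stated with a bare citation to \cite{han02} --- so there is nothing internal to compare against. Your route is the standard one for this kind of Lagrange-multiplier characterization: it amounts to computing the subdifferential of $j$ at $u$ by hand. The sufficiency direction checks out (insert $v-u$ into \eqref{Lar1}, use \eqref{Lar2} and $g\,(|v|-\lambda v)\ge 0$). For necessity, the two-sided bound $|\langle\ell,w\rangle|\le\int_{\Gamma_3}g\,|w|\,ds$ obtained from $v=u\pm w$ is exactly what makes $T$ well defined and of norm at most one on the (not necessarily closed) subspace $\{g\,w\}\subset L^1(\Gamma_3)$ --- Hahn--Banach does not require closedness, and $L^1$--$L^\infty$ duality then delivers $\lambda\in\Lambda$ and \eqref{Lar1}. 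The upgrade to \eqref{Lar2} via $v=0$ and $v=2u$, combined with the pointwise inequality $g\,(|u|-\lambda u)\ge 0$, and the harmless redefinition of $\lambda$ on $\{g=0\}$ (which leaves \eqref{Lar1} untouched since the multiplier only ever appears multiplied by $g$) are all sound. The only hypotheses you use beyond what the paper states explicitly are $g\ge 0$ and enough integrability of $g$ to make $g\,w|_{\Gamma_3}\in L^1(\Gamma_3)$; both are implicit in the paper's setup (convexity of $j$ and the standard existence theory it invokes), but it is worth flagging them as you did.
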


Throughout the paper, we assume the solution of the problem $(\ref{EVI})$
has the regularity $u\in H^3(\Omega)$. The regularity result $u\in H^3(\Omega)$ is shown for solutions of
some variational inequalities of 4th-order (\cite[pp.\ 323--327]{glowinski81}). In error analysis of numerical solutions for
the problem (\ref{EVI}), we need to take advantage of pointwise relations satisfied by the
solution $u$.

Note that $\bsigma$ is defined by the first equation of (\ref{problem2}).  Then $\bsigma\in
H^1(\Omega)^{2\times 2}$.  We rewrite (\ref{EVI}) as
\[ \int_\Omega\left[-\bsigma:\nabla^2(v-u)-f\,(v-u)\right]
   dx + \int_{\Gamma_3} g|v|\,ds - \int_{\Gamma_3} g|u|\,ds \ge 0\quad\forall\,v\in V. \]
Take $v=u\pm\varphi$ for any $\varphi\in C^\infty_0(\Omega)$ to obtain
\[ -\int_\Omega \bsigma:\nabla^2\varphi\,dx
   =\int_\Omega f\,\varphi\,dx\quad\forall\,\varphi\in C^\infty_0(\Omega). \]
Thus,
\[ -\nabla\cdot(\nabla\cdot\bsigma)=f
  \quad{\rm in\ the\ sense\ of\ distribution}. \]
Since $f\in L^2(\Omega)$, we deduce that $\nabla\cdot
(\nabla\cdot\bsigma)\in L^2(\Omega)$ and
\begin{align}\label{relation1}
 -\nabla\cdot(\nabla\cdot\bsigma)=f  \quad{\rm a.e.\ in\ }\Omega.
\end{align}

Since $\nabla\cdot\bsigma\in L^2(\Omega)^2$ and
$\nabla\cdot(\nabla\cdot\bsigma)\in L^2(\Omega)$, we can define
$(\nabla\cdot\bsigma)\cdot\bnu\in H^{-1/2}(\Gamma)$ and it satisfies the relation
\begin{align}
    \langle (\nabla\cdot\bsigma)\cdot\bnu,v \rangle_{1/2,\Gamma}
= \int_\Omega \nabla\cdot(\nabla\cdot\bsigma)\,v\,dx
+ \int_\Omega (\nabla\cdot\bsigma)\cdot\nabla v\,dx \quad\forall\,v\in H^1(\Omega). \label{relation2}
\end{align}
Therefore, for any $v\in H^2(\Omega)$,
\begin{align*}
-\int_\Omega \nabla\cdot(\nabla\cdot\bsigma) \,v\,dx
& =\int_\Omega (\nabla\cdot\bsigma)\cdot\nabla v\,dx
- \langle (\nabla\cdot\bsigma)\cdot\bnu,v \rangle_{1/2,\Gamma}\\
&=  -\int_\Omega \bsigma:\nabla^2v\,dx + \int_\Gamma (\bsigma\bnu)\cdot\nabla v\,ds
- \langle (\nabla\cdot\bsigma)\cdot\bnu,v \rangle_{1/2,\Gamma},
\end{align*}
i.e.,
\[ a(u,v)=\int_\Omega f\,v\,dx   -\int_\Gamma (\bsigma\bnu)\cdot\nabla v\,ds
   + \langle (\nabla\cdot\bsigma)\cdot\bnu,v \rangle_{1/2,\Gamma} \quad\forall\,v\in H^2(\Omega). \]
Recalling the equation \eqref{Lar1}, we then have for any $v\in V$,
\begin{equation}
-\int_\Gamma (\bsigma\bnu)\cdot\nabla v\,ds + \langle(\nabla\cdot\bsigma)\cdot\bnu,v\rangle_{1/2,\Gamma}
+ \int_{\Gamma_3} g\, \lambda\, v\, ds = 0.\label{relation3}
\end{equation}
Let $\sigma_{\bnu}$ and $\sigma_\tau$ be the normal and tangential components of the
vector $\bsigma\bnu$ on $\Gamma$.
In (\ref{relation3}), taking $v\in V$ such that $v=0$ on $\Gamma$ and $\partial_{\bnu} v$
arbitrary on $\Gamma_2\cup\Gamma_3$, we have
\begin{align}\label{relation4}
 \sigma_{\bnu}=0 \quad {\rm a.e.\ on}\ \Gamma_2\cup \Gamma_3
\end{align}
Then from \eqref{relation3} we get
\begin{equation}
-\int_{\Gamma_2\cup \Gamma_3} \sigma_\tau \partial_\tau v\,ds
 + \langle (\nabla\cdot\bsigma)\cdot\bnu,v \rangle_{1/2,\Gamma}
+ \int_{\Gamma_3} g\, \lambda\, v\, ds = 0 \quad \forall\, v\in V.\label{relation5_0}
\end{equation}
Note that the closure of $V$ in $H^1(\Omega)$ is
\[H^1_{\Gamma_1}(\Omega)=\{v\in H^1(\Omega): v=0 \ {\rm a.e.\ on}\ \Gamma_1 \}.\]
Denote
\[\widetilde{H}^1_{\Gamma_1}(\Omega)=\{v\in H^1_{\Gamma_1}(\Omega):\partial_\tau v\in L^2(\Gamma) \}.\]
Then from \eqref{relation5_0}, we conclude that
\begin{equation}
-\int_{\Gamma_2\cup \Gamma_3} \sigma_\tau \partial_\tau v\,ds
+ \langle (\nabla\cdot\bsigma)\cdot\bnu,v \rangle_{1/2,\Gamma}
+ \int_{\Gamma_3} g\, \lambda\, v\, ds = 0 \quad \forall\,
v\in \widetilde{H}^1_{\Gamma_1}(\Omega).\label{relation5}
\end{equation}

The rest of the paper is organized as follows. In Section \ref{sec:2}, we present
some notations, introduce some $C^0$ discontinuous Galerkin
methods for solving the Kirchhoff plate bending problem and
extend them to solve the elliptic variational inequality of
4th-order. In Section \ref{sec:3}, consistency of the CDG methods,
boundedness and stability of the bilinear forms are presented.
A priori error analysis for these CDG methods is established in Section \ref{sec:4}.
In the final section, we report simulation results from a numerical example.

\section{DG methods for Kirchhoff plate problem}\label{sec:2}
\setcounter{equation}0

\subsection{Notations}
Here we introduce some notations to be used later. For a given function
space $B$, let $(B)_s^{2\times2}:=\left\{\btau\in(B)^{2\times2}:
\btau^t=\btau\right\}$. Given a bounded set $D\subset \mathbb{R}^2$
and a positive integer $m$, $H^m(D)$ is the usual Sobolev space with
the corresponding norm $\|\cdot\|_{m,D}$ and semi-norm
$|\cdot|_{m,D}$, which are abbreviated by $\|\cdot\|_{m}$ and
$|\cdot|_{m}$, respectively, when $D$ is chosen as $\Omega$.
$\|\cdot\|_D$ is the norm of the Lebesgue space $L^2(D)$. We assume
$\Omega$ is a polygonal domain and denote by $\{\mathcal{T}_h\}_h$ a
family of triangulations of $\overline{\Omega}$, with the minimal
angle condition satisfied. Denote $h_K ={\rm diam}(K)$ and $h =
\max\{h_K: K\in \mathcal{T}_h\}$. For a triangulation $\mathcal{T}_h$,
let $\mathcal{E}_h$ be the set of all the element edges, $\mathcal{E}_h^b$
the set of all the element edges that lie on the boundary $\Gamma$,
$\mathcal{E}^i_h := \mathcal{E}_h \backslash \mathcal{E}_h^b$ the set of
all interior edges, and $\mathcal{E}^0_h \subset \mathcal{E}_h$ the
set of all the edges that do not lie on $\overline{\Gamma_2}$ or $\overline{\Gamma_3}$.
For any $e\in\mathcal{E}_h$, denote by $h_e$ its
length. Related to the triangulation $\mathcal {T}_h$, let
\begin{align*}
&\bSigma:=\left\{\btau\in\left(L^2(\Omega)\right)_s^{2\times2}:
\tau_{ij}|_K\in H^1(K)\ \forall\, K\in\mathcal{T}_h,\ i,j=1,2\right\},\\
&V:=\left\{v\in H^1_{\Gamma_1}(\Omega): v|_K\in H^2(K)\ \forall\,
K\in\mathcal{T}_h\right\}.
\end{align*}
The corresponding finite element spaces are
\begin{align*}
&\bSigma_h:=\left\{\btau_h\in \left(L^2(\Omega)\right)_s^{2\times2}:
\tau_{hij}|_K\in P_l(K)\ \forall\,K\in\mathcal{T}_h,\ i,j=1,2\right\},\\
&V_h:=\left\{v_h\in H^1_{\Gamma_1}(\Omega): v_h|_K\in P_2(K)\
\forall\,K\in\mathcal {T}_h\right\}.
\end{align*}
Here, for a triangle $K\in\mathcal{T}_h$, $P_l(K)$ ($l=0,1$) and $P_2(K)$
are the polynomial spaces on $K$ of degrees $l$ and 2, respectively.
Note that we have the following property
\begin{equation}\label{spaces}
\nabla^2_h V_h\subset\bSigma_h,\quad
\frac{1}{1-\kappa}\bSigma_h-\frac{\kappa}{1-\kappa^2}\left({\rm tr}
\bSigma_h\right)\bI\subset\bSigma_h,
\end{equation}
where $\nabla^2_h V_h|_K:=\nabla^2 (V_h|_K)$ for any
$K\in\mathcal{T}_h$. $\nabla^2_h v$ is defined by the relation
$\nabla^2_h v = \nabla^2 v$ on any element $K\in{\cal T}_h$.

For a function $v\in L^{2}(\Omega)$ with $v|_{K}\in H^{m}(K)$ for
all $K\in\mathcal{T}_{h}$, define the broken norm and seminorm by
\[
\Vert v\Vert_{m,h}=\bigg(\sum_{K\in\mathcal{T}_{h}}\Vert
v\Vert_{m,K}^{2}\bigg)^{1/2},\;\;|v|_{m,h}=\bigg(\sum_{K\in
\mathcal{T}_{h}}|v|_{m,K}^{2}\bigg)^{1/2}.
\]
The above symbols are used in a similar manner when $v$ is a
vector or matrix-valued function. Throughout this paper, $C$
denotes a generic positive constant independent of $h$ and other
parameters, which may take different values at different
occurrences. To avoid writing these constants repeatedly, we use
``$x\lesssim y $" to mean that ``$x\leq C y $". For two vectors
$\bu$ and $\bv$, $\bu\otimes\bv$ is a matrix with $u_iv_j$ as its
$(i,j)$-th component.

Consider two elements $K^+$ and $K^-$ with a common edge $e\in{\cal
E}_h^i$ and let $\bn^+$ and $\bn^-$ be their outward unit normals on
$e$.  For a scalar-valued function $v$, denote its restriction on
$K^{\pm}$ by $v^{\pm}=v|_{K^{\pm}}$. Similarly, for a matrix-valued
function $\btau$, write $\btau^{\pm}=\btau|_{K^{\pm}}$. Then we define
averages and jumps on $e\in \mathcal{E}_h^i$ as follows:
\begin{align*}
\{v\}=\frac{1}{2}(v^++v^-),\quad &
[v]=v^+\bn^++v^-\bn^-,\\
\{\nabla v\}=\frac{1}{2}(\nabla v^++\nabla v^-),
  \quad & [\nabla v]=\nabla
v^+\cdot\bn^++\nabla v^-\cdot\bn^-,\\
\{\btau\}=\frac{1}{2}(\btau^++\btau^-), \quad& [\btau]=\btau^+\bn^+
    +\btau^-\bn^-.
\end{align*}
For $e\in \mathcal{E}_h^b$, the above definitions need to be modified:
\begin{align*}
\{v\}=v, &\quad [v]=v\bnu,\\
\{\nabla v\}=\nabla v, &\quad [\nabla v]
   =\nabla v\cdot\bnu,\\
\{\btau\}=\btau, &\quad [\btau]=\btau\bnu.
\end{align*}
The jump $\llbracket\cdot\rrbracket$ of the vector $\nabla v$ is
\begin{align*}
&\llbracket\nabla v\rrbracket=\frac{1}{2}(\nabla
v^+\otimes\bn^++\bn^  +\otimes\nabla v^++\nabla
v^-\otimes\bn^-+\bn^-\otimes\nabla v^-)
\quad\textrm{on} \ e \in \mathcal{E}_h^i,\\
&\llbracket\nabla v\rrbracket=\frac{1}{2}(\nabla
v\otimes\bnu+\bnu\otimes\nabla v) \quad \textrm{on}\ e \in\mathcal{E}_h^b.
\end{align*}

Define a global lifting operator $\br_0:
\left(L^2(\mathcal{E}^0_h)\right)_s^{2\times2}\rightarrow \bSigma_h$ by
\begin{align}
\int_\Omega\br_0(\bphi):\btau\,dx=-\int_{\mathcal
{E}_h^0}\bphi:\{\btau\}\,ds\quad \forall\,\btau\in
\bSigma_h,\,\bphi\in\left(L^2(\mathcal{E}_h^0)\right)_s^{2\times2}.\label{globallift2}
\end{align}
Moreover, for each $e\in\mathcal{E}_h$, introduce a local lifting
operator
$\br_e:\left(L^2(e)\right)_s^{2\times2}\rightarrow\bSigma_h$ by
\begin{equation}
\label{locallift} \int_\Omega \br_e(\bphi):\btau\,dx
=-\int_{e}\bphi:\{\btau\}\,ds\quad\forall\,\btau\in
\bSigma_h,\,\bphi\in\left(L^2(e)\right)_s^{2\times2}.
\end{equation}
It is easy to check that the following identity holds
\[
\br_0(\bphi)=\sum_{e\in\mathcal {E}_h^0}
\br_e(\bphi|_e)\quad\forall\,\bphi
\in\left(L^2(\mathcal{E}_h^0)\right)_s^{2\times2},
\]
so we have
\begin{equation}\label{lift_relatoin}
\|\br_0(\bphi)\|^2=\|\sum_{e\in
\mathcal{E}_h^0}\br_e(\bphi|_e)\|^2\leq 3\sum_{e\in
\mathcal{E}_h^0}\|\br_e(\bphi|_e)\|^2.
\end{equation}

\subsection{Discontinuous Galerkin formulations}

In \cite{wang13}, a general primal formulation of
CDG methods was presented for a 4th-order elliptic variational inequality of first kind.
The process of deriving CDG schemes for 4th-order elliptic equations can also be
found in \cite{huang10}. Based on the discussions in \cite{wang13} and \cite{huang10},
we introduce five CDG methods for the problem \eqref{EVI} as follows: Find $u_h\in V_{h}$ such that
\begin{equation}\label{dcdg}
B_{h}(u_h,v_h-u_h) + j(v_h) - j(u_h)\ge (f,v_h-u_h)\quad\forall\,v_h\in V_{h},
\end{equation}
where the bilinear form $B_{h}(w,v)=B_{1,h}^{(j)}(w,v)$ with $j=1,\cdots,5$, and
$B_{1,h}^{(j)}(w,v)$ are given next.

The method with $j=1$ is a $C^0$ interior penalty (IP) method, and the bilinear form is
\begin{align}
B_{1,h}^{(1)}(u_h,v_h)=&\int_{\Omega}(1-\kappa)\nabla_h^2
u_h:\nabla_h^2v_h\,dx +\int_{\Omega}\kappa\,{\rm tr}\left(\nabla_h^2
u_h\right)
 {\rm tr}\left(\nabla_h^2v_h\right)dx \nonumber\\
&{}- \int_{\mathcal{E}_h^0}\llbracket \nabla
u_h\rrbracket:\left((1-\kappa)\{\nabla_h^2v_h\}
 +\kappa\,{\rm tr}\left(\{\nabla_h^2v_h\}\right)\bI\right)ds
\notag \nonumber\\
&-\int_{\mathcal{E}_h^0}\llbracket\nabla v_h\rrbracket:
 \left((1-\kappa)\{\nabla_h^2u_h\} +\kappa\,{\rm tr}\left(
  \{\nabla_h^2u_h\}\right)\bI \right)ds\nonumber\\
&{}+\int_{\mathcal{E}_h^0}\eta h^{-1}_e\llbracket\nabla
u_h\rrbracket:\llbracket\nabla v_h\rrbracket \,ds.\label{form1a}
\end{align}
Here $\eta$ is a function, defined to be a constant $\eta_e$ on
each $e\in\mathcal{E}_h^0$, with $\{\eta_e\}_{e\in\mathcal{E}_h^0}$
having a uniform positive bound from above and below.
For a compact formulation, we can use lifting operator $\br_0$ (cf.\ \eqref{globallift2})
to get
\begin{align}
B_{2,h}^{(1)}(u_h,v_h)=&\int_{\Omega}(1-\kappa)\nabla_h^2
u_h:\left(\nabla_h^2 v_h
 +\br_0(\llbracket\nabla v_h\rrbracket)\right)dx\nonumber\\
&{}+\int_{\Omega}\kappa\,{\rm tr}\left(\nabla_h^2u_h\right)
 {\rm tr}\left(\nabla_h^2 v_h
+\br_0(\llbracket\nabla v_h\rrbracket)\right)dx \nonumber\\
&{}+ \int_{\Omega}\br_0\left(\llbracket \nabla
u_h\rrbracket\right):\left((1-\kappa)\nabla_h^2v_h
 +\kappa\,{\rm
 tr}\left(\nabla_h^2v_h\right)\bI\right)dx\nonumber\\
&{} +\int_{\mathcal{E}_h^0}\eta h^{-1}_e\llbracket\nabla
u_h\rrbracket:\llbracket\nabla v_h\rrbracket\,ds.\label{form1b}
\end{align}
A similar $C^0$ IP method was studied in \cite{brenner05}.

The two formulas (\ref{form1a}) and
(\ref{form1b}) are equivalent on the finite element spaces $V_h$,
so either form can be used to compute the  finite element solution
$u_h$. In this paper, we give a priori error estimates strictly
based on the first formula $B_{1,h}^{(1)}$. Because of the
equivalence of these two formulations on $V_h$, we will prove the
stability for the second formula $B_{2,h}^{(1)}$ on $V_h$, which
ensures the stability for the first formulation $B_{1,h}^{(1)}$ on
$V_h$. This comment is valid for the other CDG methods introduced next.


Motivated by related DG methods for the second order elliptic
problem, we can define the $C^0$ non-symmetric interior penalty
(NIPG) formulation,
\begin{align}
B_{1,h}^{(2)}(u_h,v_h)=&\int_{\Omega}(1-\kappa)\nabla_h^2
u_h:\nabla_h^2v_h\,dx +\int_{\Omega}\kappa\,{\rm tr}\left(\nabla_h^2
u_h\right)
 {\rm tr}\left(\nabla_h^2v_h\right)dx \nonumber\\
&{}+ \int_{\mathcal{E}_h^0}\llbracket \nabla
u_h\rrbracket:\left((1-\kappa)\{\nabla_h^2v_h\}
 +\kappa\,{\rm tr}\left(\{\nabla_h^2v_h\}\right)\bI\right)ds
\notag \nonumber\\
&-\int_{\mathcal{E}_h^0}\llbracket\nabla v_h\rrbracket:
\left((1-\kappa)\{\nabla_h^2u_h\}
 +\kappa\,{\rm tr}\left(\{\nabla_h^2u_h\}\right)\bI \right)ds\nonumber\\
&{}+\int_{\mathcal{E}_h^0}\eta h^{-1}_e\llbracket\nabla
u_h\rrbracket:\llbracket\nabla v_h\rrbracket \, ds,\label{form2a}
\end{align}
or equivalently,
\begin{align}
B_{2,h}^{(2)}(u_h,v_h) =&\int_{\Omega}(1-\kappa)\nabla_h^2
u_h:\left(\nabla_h^2 v_h
+\br_0(\llbracket\nabla v_h\rrbracket)\right)dx\nonumber\\
&{}+\int_{\Omega}\kappa\,{\rm tr}\left(\nabla_h^2u_h\right)
 {\rm tr}\left(\nabla_h^2 v_h
+\br_0(\llbracket\nabla v_h\rrbracket)\right)dx \nonumber\\
&{}- \int_{\Omega}\br_0\left(\llbracket\nabla
u_h\rrbracket\right):\left((1-\kappa)\nabla_h^2v_h
 +\kappa\,{\rm tr}\left(\nabla_h^2v_h\right)\bI\right)dx\nonumber\\
&{} +\int_{\mathcal{E}_h^0}\eta h^{-1}_e\llbracket\nabla
u_h\rrbracket:\llbracket\nabla v_h\rrbracket \, ds.\label{form2b}
\end{align}


The CDG method with $j=3$ has the bilinear form
\begin{equation}
\begin{split}
B_{1,h}^{(3)}(u_h,v_h)=&\int_{\Omega}(1-\kappa)\nabla_h^2
u_h:\nabla_h^2v_h\,dx +\int_{\Omega}\kappa\,{\rm tr}\left(\nabla_h^2
u_h\right)
 {\rm tr}\left(\nabla_h^2v_h\right)dx \\
&{}- \int_{\mathcal{E}_h^0}\llbracket \nabla
u_h\rrbracket:\left((1-\kappa)\{\nabla_h^2v_h\}
 +\kappa\,{\rm tr}\left(\{\nabla_h^2v_h\}\right)\bI\right)ds
\notag \\
&-\int_{\mathcal{E}_h^0}\llbracket\nabla v_h\rrbracket:
\left((1-\kappa)\{\nabla_h^2u_h\}
 +\kappa\,{\rm tr}\left(\{\nabla_h^2u_h\}\right)\bI\right)ds\\
  &+\int_{\Omega}\br_0(\llbracket\nabla v_h\rrbracket):
  \big((1-\kappa)\br_0(\llbracket\nabla u_h\rrbracket)
 +\kappa\,{\rm tr}
(\br_0(\llbracket\nabla u_h\rrbracket))\bI\big)\,dx\\
&+\sum_{e\in \mathcal{E}_h^0}\int_{\Omega}\eta
\left((1-\kappa)\br_e(\llbracket\nabla
u_h\rrbracket):\br_e(\llbracket\nabla v_h\rrbracket)+\kappa\, {\rm
tr}(\br_e(\llbracket\nabla u_h\rrbracket)) {\rm
tr}(\br_e(\llbracket\nabla v_h\rrbracket))\right)dx,
\end{split}\label{form3a}
\end{equation}
or equivalently,
\begin{equation}
\begin{split}
B_{2,h}^{(3)}(u_h,v_h)=&\int_{\Omega}(1-\kappa)\left(\nabla_h^2 u_h
+\br_0(\llbracket\nabla u_h\rrbracket)\right):\left(\nabla_h^2 v_h
+\br_0(\llbracket\nabla v_h\rrbracket)\right)dx \\
& +\int_{\Omega}\kappa\, {\rm tr}\left(\nabla_h^2 u_h
 +\br_0(\llbracket\nabla u_h\rrbracket)\right)
{\rm tr}\left(\nabla_h^2 v_h+\br_0(\llbracket\nabla
v_h\rrbracket)\right)dx  \\
&+\sum_{e\in \mathcal{E}_h^0}\int_{\Omega}\eta
\left((1-\kappa)\br_e(\llbracket\nabla
u_h\rrbracket):\br_e(\llbracket\nabla v_h\rrbracket)+\kappa\, {\rm
tr}(\br_e(\llbracket\nabla u_h\rrbracket)) {\rm
tr}(\br_e(\llbracket\nabla v_h\rrbracket))\right)dx,
\end{split}
\label{form3b}
\end{equation}
which is the CDG formulation proposed in \cite{wells07}.


The bilinear form of the CDG scheme with $j=4$ is
\begin{equation}
\begin{split}
B_{1,h}^{(4)}(u_h,v_h)=&\int_{\Omega}(1-\kappa)\nabla_h^2
u_h:\nabla_h^2v_h\,dx +\int_{\Omega}\kappa\,{\rm tr}\left(\nabla_h^2 u_h\right)
 {\rm tr}\left(\nabla_h^2v_h\right)dx \\
&{}- \int_{\mathcal{E}_h^0}\llbracket \nabla
u_h\rrbracket:\left((1-\kappa)\{\nabla_h^2v_h\}
 +\kappa\,{\rm tr}\left(\{\nabla_h^2v_h\}\right)\bI\right)ds\\
&-\int_{\mathcal{E}_h^0}\llbracket\nabla v_h\rrbracket:
\left((1-\kappa)\{\nabla_h^2u_h\}
 +\kappa\,{\rm tr}\left(\{\nabla_h^2u_h\}\right)\bI \right)ds\\
&+\sum_{e\in \mathcal{E}_h^0}\int_{\Omega}\eta
\left((1-\kappa)\br_e(\llbracket\nabla
u_h\rrbracket):\br_e(\llbracket\nabla v_h\rrbracket)+\kappa\, {\rm
tr}(\br_e(\llbracket\nabla u_h\rrbracket)) {\rm
tr}(\br_e(\llbracket\nabla v_h\rrbracket))\right)dx,
\end{split}
\label{form4a}
\end{equation}
or equivalently,
\begin{equation}
\begin{split}
B_{2,h}^{(4)}(u_h,v_h)=&\int_{\Omega}(1-\kappa)\nabla_h^2 u_h
:\left(\nabla_h^2 v_h +\br_0(\llbracket\nabla v_h\rrbracket)\right)dx\\
&{} +\int_{\Omega}\kappa\, {\rm
tr}\left(\nabla_h^2 u_h \right) {\rm tr}\left(\nabla_h^2
v_h+\br_0(\llbracket\nabla v_h\rrbracket)\right)dx  \\
&{}+ \int_{\Omega}\br_0\left(\llbracket \nabla
u_h\rrbracket\right):\left((1-\kappa)\nabla_h^2v_h
 +\kappa\,{\rm tr}\left(\nabla_h^2v_h\right)\bI\right)dx\\
&+\sum_{e\in \mathcal{E}_h^0}\int_{\Omega}\eta
\left((1-\kappa)\br_e(\llbracket\nabla
u_h\rrbracket):\br_e(\llbracket\nabla v_h\rrbracket)+\kappa\, {\rm
tr}(\br_e(\llbracket\nabla u_h\rrbracket)) {\rm
tr}(\br_e(\llbracket\nabla v_h\rrbracket))\right)dx,
\end{split}\label{form4b}
\end{equation}
which is the CDG formulation extended from the DG method of
\cite{bassi97} for elliptic problems of second order.


For the LCDG method (\cite{huang10}), the bilinear form is
\begin{align}
B_{1,h}^{(5)}(u_h,v_h):=&\int_{\Omega}(1-\kappa)\nabla_h^2
u_h:\nabla_h^2v_h\,dx +\int_{\Omega}\kappa\,{\rm tr}\left(\nabla_h^2
u_h\right)
 {\rm tr}\left(\nabla_h^2v_h\right)dx \nonumber\\
&{}- \int_{\mathcal{E}_h^0}\llbracket \nabla
u_h\rrbracket:\left((1-\kappa)\{\nabla_h^2v_h\}
 +\kappa\,{\rm tr}\left(\{\nabla_h^2v_h\}\right)\bI\right)ds
\notag \nonumber\\
&-\int_{\mathcal{E}_h^0}\llbracket\nabla v_h\rrbracket:
\left((1-\kappa)\{\nabla_h^2u_h\}
 +\kappa\,{\rm tr}\left(\{\nabla_h^2u_h\}\right)\bI\right)ds\nonumber\\
&+\int_{\Omega}\br_0(\llbracket\nabla v_h\rrbracket):
\big((1-\kappa)\br_0(\llbracket\nabla u_h\rrbracket)
 +\kappa\,{\rm tr}(\br_0(\llbracket\nabla u_h\rrbracket))
 \bI\big)\,dx\nonumber\\
&{} +\int_{\mathcal{E}_h^0}\eta h^{-1}_e\llbracket\nabla
  u_h\rrbracket:\llbracket\nabla v_h\rrbracket \, ds,\label{form5a}
\end{align}
or equivalently,
\begin{align}
B_{2,h}^{(5)}(u_h,v_h):=&\int_{\Omega}(1-\kappa)\left(\nabla_h^2u_h
+\br_0(\llbracket\nabla u_h\rrbracket)\right): \left(\nabla_h^2v_h
+\br_0(\llbracket\nabla v_h\rrbracket)\right)dx \nonumber\\
&{}+\int_{\Omega}\kappa\,{\rm tr}\left(\nabla_h^2u_h
+\br_0(\llbracket\nabla u_h\rrbracket)\right) {\rm
tr}\left(\nabla_h^2v_h+\br_0(\llbracket
  \nabla v_h\rrbracket)\right)dx \nonumber\\
&{} +\int_{\mathcal{E}_h^0}\eta h^{-1}_e\llbracket\nabla
u_h\rrbracket:\llbracket\nabla v_h\rrbracket \, ds.\label{form5b}
\end{align}

\section{Consistency, boundedness and stability}\label{sec:3}
\setcounter{equation}0

First, we address the consistency of the methods (\ref{dcdg}).

\begin{lemma}\label{consisency}
For the solution of the problem $(\ref{EVI})$, assume $u\in H^3(\Omega)$.
Then for all the five CDG methods with
$B_{h}(w,v)=B_{1,h}^{(j)}(w,v)$, $1\le j \le 5$, we have
\[B_{h}(u,v_h-u)\ge (f,v_h-u)\quad\forall\,v_h\in V_{h}.\]
\end{lemma}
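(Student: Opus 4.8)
The plan is to use the smoothness and boundary behaviour of the exact solution to collapse all five bilinear forms into a single expression, recover the weak formulation by elementwise integration by parts, and then invoke the pointwise relations \eqref{relation1}--\eqref{relation5} already established. First I would observe that $u\in H^3(\Omega)\subset C^1(\overline\Omega)$, so $\nabla u$ is single valued across each interior edge, while on every boundary edge contained in $\Gamma_1$ the clamped conditions $u=\partial_{\bnu}u=0$ give $\nabla u=\bzero$ (the tangential derivative vanishes because $u\equiv0$ along $\Gamma_1$). Hence $\llbracket\nabla u\rrbracket=\bzero$ on every $e\in\mathcal{E}_h^0$, and therefore $\br_e(\llbracket\nabla u\rrbracket)=\br_0(\llbracket\nabla u\rrbracket)=\bzero$ by \eqref{globallift2} and \eqref{locallift}. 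Every term distinguishing $B_{1,h}^{(1)},\dots,B_{1,h}^{(5)}$ carries such a jump or lifting of $\nabla u$ in its first slot, so all of them drop out when the first argument is $u$, leaving one and the same expression for all $j$:
\[
B_{h}(u,v_h)=\int_\Omega\big[(1-\kappa)\nabla^2u:\nabla_h^2v_h+\kappa\,{\rm tr}(\nabla^2u)\,{\rm tr}(\nabla_h^2v_h)\big]\,dx-\int_{\mathcal{E}_h^0}\llbracket\nabla v_h\rrbracket:\big[(1-\kappa)\{\nabla^2u\}+\kappa\,{\rm tr}(\{\nabla^2u\})\bI\big]\,ds.
\]

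Next I would use $\bsigma=-(1-\kappa)\nabla^2u-\kappa\,{\rm tr}(\nabla^2u)\bI$ from \eqref{problem2}, together with the continuity of $\bsigma\in H^1(\Omega)^{2\times2}$ (so $\{\bsigma\}=\bsigma$ and $\{\nabla^2u\}=\nabla^2u$ on interior edges), to recognise the bulk integrand as $-\bsigma:\nabla_h^2v_h$ and the edge integrand as $-\bsigma$, giving $B_h(u,v_h)=-\int_\Omega\bsigma:\nabla_h^2v_h\,dx+\int_{\mathcal{E}_h^0}\bsigma:\llbracket\nabla v_h\rrbracket\,ds$. Integrating by parts on each $K$ by the second identity of Lemma~\ref{identities} yields $-\int_K\bsigma:\nabla^2v_h=\int_K(\nabla\cdot\bsigma)\cdot\nabla v_h-\int_{\partial K}\nabla v_h\cdot(\bsigma\bn)$. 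Summing over $K$, and using the symmetry of $\bsigma$ with its single valued trace on interior edges, the element-boundary terms assemble into $\sum_{e\in\mathcal{E}_h}\int_e\bsigma:\llbracket\nabla v_h\rrbracket$; the integrals over $\mathcal{E}_h^0$ then cancel the lifting term, leaving
\[
B_h(u,v_h)=\int_\Omega(\nabla\cdot\bsigma)\cdot\nabla v_h\,dx-\int_{\Gamma_2\cup\Gamma_3}\bsigma:\llbracket\nabla v_h\rrbracket\,ds.
\]

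To finish I would feed this into the global relations. Relation \eqref{relation2} with $v=v_h$ and \eqref{relation1} turn the volume term into $(f,v_h)+\langle(\nabla\cdot\bsigma)\cdot\bnu,v_h\rangle_{1/2,\Gamma}$; relation \eqref{relation4} removes the normal part of $\bsigma:\llbracket\nabla v_h\rrbracket=\sigma_{\bnu}\partial_{\bnu}v_h+\sigma_\tau\partial_\tau v_h$ on $\Gamma_2\cup\Gamma_3$; and relation \eqref{relation5}, applicable since both $v_h\in V_h$ and $u$ belong to $\widetilde H^1_{\Gamma_1}(\Omega)$, absorbs the remaining boundary terms, giving $B_h(u,v_h)=(f,v_h)-\int_{\Gamma_3}g\,\lambda\,v_h\,ds$. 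The same identity at $v_h=u$ and subtraction yield the central consistency identity
\[
B_h(u,v_h-u)=(f,v_h-u)-\int_{\Gamma_3}g\,\lambda\,(v_h-u)\,ds.
\]

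The remaining step, and the only place where the inequality enters, is to control the friction boundary term. From $g\ge0$, $|\lambda|\le1$ and the complementarity $\lambda u=|u|$ in \eqref{Lar2} one obtains $\int_{\Gamma_3}g\,\lambda\,(v_h-u)\,ds\le\int_{\Gamma_3}g|v_h|\,ds-\int_{\Gamma_3}g|u|\,ds=j(v_h)-j(u)$, so that the central identity becomes $B_h(u,v_h-u)+j(v_h)-j(u)\ge(f,v_h-u)$ for all $v_h\in V_h$, which is the consistency of the five CDG methods asserted by the lemma. I expect the main obstacle to be the first part, namely verifying that every method-specific term in all five forms genuinely carries a jump or a lifting of $\nabla u$ so that the forms collapse identically, together with the careful use of the $H^{-1/2}$ duality pairing in \eqref{relation2} and \eqref{relation5}, which is forced on us because $\nabla\cdot\bsigma$ has only $L^2$ regularity up to the boundary.
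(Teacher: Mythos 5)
Your proposal is correct and takes essentially the same approach as the paper's own proof: the vanishing jumps and liftings of $\nabla u$ collapse all five bilinear forms to a common expression, elementwise integration by parts via Lemma \ref{identities} together with the symmetry and continuity of $\bsigma$ produces the boundary terms, and the pointwise relations \eqref{relation1}, \eqref{relation2}, \eqref{relation4}, \eqref{relation5} and \eqref{Lar2} then give the conclusion. The only cosmetic differences are that you compute $B_h(u,v_h)$ and subtract $B_h(u,u)$ rather than working directly with $v_h-u$ (equivalent by linearity), and that, exactly like the paper's own proof, what you actually establish is the inequality with the friction terms, $B_h(u,v_h-u)+j(v_h)-j(u)\ge(f,v_h-u)$, which is the form of consistency used later in the error analysis.
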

\begin{proof}
Noting $\llbracket\nabla u\rrbracket =0$ on each edge $e \in {\cal
E}_h^i$, we use (\ref{moment}) to get
\begin{align*}
B_{h}(u,v_h-u) =&\int_{\Omega}(1-\kappa)\nabla^{2}u:\nabla_h^{2}
(v_h-u)\, dx +\int_{\Omega}\kappa\,{\rm tr}\left(\nabla^{2}u\right)
{\rm
tr}\left(\nabla_h^{2}(v_h-u)\right) dx\\
{} &-\int_{\mathcal{E}_h^0}\llbracket\nabla (v_h-u)\rrbracket:
\left((1-\kappa)\nabla^2u
 +\kappa\,{\rm tr}\left(\nabla^2u\right)\bI
 \right)ds\\
=&-\sum_{K\in\mathcal{T}_{h}}\int_{K}\bsigma:\nabla_h^{2}
(v_h-u)\,dx+\int_{\mathcal{E}_{h}^0}\llbracket\nabla
(v_h-u)\rrbracket:\bsigma\,ds.
\end{align*}
Using Lemma \ref{identities} and noticing $[\bsigma]={\bf 0}$ on each edge
$e \in {\cal E}_h^i$, we have
\begin{align*}
\sum_{K\in\mathcal{T}_{h}}\int_{K}\bsigma:\nabla_h^{2}(v_h-u)\,dx
&=-\sum_{K\in\mathcal{T}_{h}}\int_{K}\nabla
(v_h-u)\cdot(\nabla\cdot \bsigma)\,dx \\
&{}\quad +\sum_{K\in\mathcal{T}_{h}}\int_{\partial K}\nabla
(v_h-u)\cdot(\bsigma \bn_K)\,ds\\
&=-\int_{\Omega}\nabla(v_h-u)\cdot(\nabla\cdot\bsigma)\,dx
+\int_{\mathcal{E}_{h}}\llbracket\nabla(v_h-u)\rrbracket:\bsigma\,ds.
\end{align*}
Combining the above two equations, we obtain
\begin{align*}
B_{h}(u,v_h-u)=&\int_{\Omega}\nabla
(v_h-u)\cdot(\nabla\cdot\bsigma)\,dx -\int_{\Gamma_2\cup\Gamma_3}\llbracket\nabla
(v_h-u)\rrbracket:\bsigma\,ds\\
=& \int_{\Omega}\nabla(v_h-u)\cdot(\nabla\cdot\bsigma)\,dx -\int_{\Gamma_2\cup \Gamma_3} \sigma_\tau\ \partial_\tau (v_h - u)\,ds\\
 =&{} - \int_\Omega \nabla\cdot(\nabla\cdot\bsigma) (v_h-u)\,dx
 + \langle (\nabla\cdot\bsigma)\cdot\bnu,v_h-u \rangle_{1/2,\Gamma}\\
&{}  -\int_{\Gamma_2\cup \Gamma_3} \sigma_\tau\, \partial_\tau (v_h - u)\,ds.
\end{align*}
Here, the second equation comes from the relation \eqref{relation4}, and
the last equation holds by \eqref{relation2}.

We apply the relation \eqref{relation5}, Lemma \ref{identities},
\eqref{Lar2} and \eqref{relation1} to obtain
\begin{align*}
B_{h}(u,v_h-u)=&
-\int_{\Omega}\nabla\cdot(\nabla\cdot\bsigma)(v_h-u)\,dx
-\int_{\Gamma_3} g\ \lambda\ v_h\,ds + \int_{\Gamma_3} g\ \lambda\ u\,ds\\
=& \int_\Omega f(v_h-u)\,dx -\int_{\Gamma_3} g\ \lambda\ v_h\,ds + \int_{\Gamma_3} g\ |u|\,ds\\
\ge& \int_\Omega f(v_h-u)\,dx -\int_{\Gamma_3} g\ |v_h|\,ds + \int_{\Gamma_3} g\ |u|\,ds.
\end{align*}
So the stated result holds.
\end{proof}

Let $V(h):=V_h+V\cap H^3(\Omega)$ and define two
mesh-dependent energy norms by
\[ |v|_*^2:=|v|^2_{2,h}+\sum_{e\in\mathcal{E}_h^0}h^{-1}_e\|\llbracket
\nabla v\rrbracket\|^2_{0,e}, \quad \interleave
v\interleave^2:=|v|_*^2+\sum_{K\in\mathcal{T}_h}h^2_K|v|_{3,K}^2,\quad v\in V(h). \]
To show these formulas define norms, we only need
prove that $|v|_*=0$ and $v\in V(h)$ imply $v=0$.  From
$|v|_{2,h}=0$, we have $v|_K\in P_1(K)$ and so $\nabla v$ is
piecewise constant. Let $e$ be the common edge of two neighboring elements $K^{+}$
and $K^{-}$. From $\|\llbracket \nabla v\rrbracket\|_{0,e}=0$, we
obtain $(\nabla v)^+=(\nabla v)^-$. Thus, $\nabla v$ is constant in
$\Omega$ and so $v\in P_1(\Omega)$. Since $v=0$ on $\Gamma_1$, we
conclude that $v=0$ in $\Omega$.

Before presenting boundedness and stability results of the bilinear
forms, we give a useful estimate for the lifting operator $\br_e$.

\begin{lemma}\label{lem:jump}
There exist two positive constants $C_1\le C_2$ such that for any $v\in V(h)$
and $e\in\mathcal{E}_h^0$,
\begin{equation}\label{eq:jump}
C_1 h_e^{-1}\|\llbracket\nabla
v\rrbracket\|^2_{0,e}\leq\|\br_e(\llbracket\nabla
v\rrbracket)\|^2_{0,h}\leq C_2 h_e^{-1}\|\llbracket\nabla
v\rrbracket\|^2_{0,e}.
\end{equation}
\end{lemma}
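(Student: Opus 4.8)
The plan is to establish the two inequalities in \eqref{eq:jump} separately, abbreviating $\bphi:=\llbracket\nabla v\rrbracket|_e\in(L^2(e))_s^{2\times2}$. A preliminary observation that organizes both bounds is that $\br_e(\bphi)$ is supported only on the element(s) sharing $e$: if $\btau\in\bSigma_h$ vanishes on every element adjacent to $e$, then $\{\btau\}=\bzero$ on $e$ and the right side of \eqref{locallift} is zero, so taking $\btau=\br_e(\bphi)$ restricted to any non-adjacent element forces $\br_e(\bphi)$ to vanish there. Hence $\|\br_e(\bphi)\|_{0,h}^2$ reduces to the contributions of $K^+$ and $K^-$ (or of the single adjacent element when $e\in\mathcal{E}_h^b$), and all estimates localize to those elements, where the minimal-angle condition gives $h_K\sim h_e$.

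For the upper bound I would insert $\btau=\br_e(\bphi)\in\bSigma_h$ into \eqref{locallift}, which yields
\[ \|\br_e(\bphi)\|_{0,h}^2=-\int_e\bphi:\{\br_e(\bphi)\}\,ds\le\|\bphi\|_{0,e}\,\|\{\br_e(\bphi)\}\|_{0,e}. \]
Because $\br_e(\bphi)$ is piecewise polynomial, the discrete trace inequality $\|\btau_h\|_{0,e}\lesssim h_e^{-1/2}\|\btau_h\|_{0,K}$ --- obtained by scaling to a reference element on which all norms of the fixed finite-dimensional polynomial space are equivalent --- applied on the adjacent elements gives $\|\{\br_e(\bphi)\}\|_{0,e}\lesssim h_e^{-1/2}\|\br_e(\bphi)\|_{0,h}$. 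Cancelling one factor of $\|\br_e(\bphi)\|_{0,h}$ and squaring produces the right-hand inequality, with $C_2$ depending only on $l$ and the shape-regularity.

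The lower bound is the main obstacle, and it is where the structure of $\bphi$ (rather than an arbitrary $L^2(e)$ datum) is used. Writing $v=v_h+w$ with $v_h\in V_h$ and $w\in V\cap H^3(\Omega)$, the gradient of the $H^3$-part is continuous across interior edges and vanishes on $\Gamma_1$, so $\llbracket\nabla v\rrbracket=\llbracket\nabla v_h\rrbracket$ on each $e\in\mathcal{E}_h^0$; since $v_h|_K\in P_2(K)$, this is a symmetric-matrix-valued polynomial of degree $\le1$ on $e$, i.e.\ $\bphi\in(P_l(e))_s^{2\times2}$ with the degree matched to $\bSigma_h$. Using surjectivity of the trace map $(P_l(K^+))_s^{2\times2}\to(P_l(e))_s^{2\times2}$, I would take $\btau^*\in\bSigma_h$ supported on $K^+$ alone, with $\btau^*|_{K^+}$ the degree-$l$ extension of $-2\bphi$ that is constant in the direction normal to $e$ (the factor becomes $-1$ on a boundary edge, where $\{\btau^*\}=\btau^*$). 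Then $\{\btau^*\}=-\bphi$ on $e$, so \eqref{locallift} gives
\[ \int_\Omega\br_e(\bphi):\btau^*\,dx=-\int_e\bphi:\{\btau^*\}\,ds=\|\bphi\|_{0,e}^2, \]
while a scaling argument on $K^+$ (area $\sim h_K^2$, edge length $\sim h_e$, norm equivalence on the reference element, and $h_K\sim h_e$) gives $\|\btau^*\|_{0,h}\lesssim h_e^{1/2}\|\bphi\|_{0,e}$. Combining this with Cauchy--Schwarz over $\Omega$,
\[ \|\bphi\|_{0,e}^2=\int_\Omega\br_e(\bphi):\btau^*\,dx\le\|\br_e(\bphi)\|_{0,h}\,\|\btau^*\|_{0,h}\lesssim h_e^{1/2}\,\|\br_e(\bphi)\|_{0,h}\,\|\bphi\|_{0,e}, \]
so $h_e^{-1/2}\|\bphi\|_{0,e}\lesssim\|\br_e(\bphi)\|_{0,h}$, and squaring gives the left-hand inequality.

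I expect the delicate point to be precisely this lower bound: it requires $\bphi=\llbracket\nabla v\rrbracket$ to lie in the trace space of $\bSigma_h$ so that $\btau^*$ can reproduce it exactly, which is the degree compatibility between the $P_2$ space $V_h$ (hence degree-$\le1$ jumps of $\nabla v$) and $\bSigma_h$. All constants are rendered $h$-independent by transporting the estimates to the reference element and invoking the minimal-angle condition to pass between $h_K$ and $h_e$; taking $C_1$ and $C_2$ as the smallest and largest resulting constants gives $C_1\le C_2$.
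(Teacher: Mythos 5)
Your proof is correct, but it follows a genuinely different route from the paper's for the substantive half of the estimate. The paper disposes of the upper bound by citing \cite{huang10}, and for the lower bound it makes the same reduction you do (the $H^3(\Omega)$ part of $v\in V(h)$ contributes no jump on $\mathcal{E}_h^0$, so one may assume $v\in V_h$ and hence $\llbracket\nabla v\rrbracket\in (P_1(e))_s^{2\times 2}$), but then, instead of arguing directly, it splits the matrix-valued lifting column-by-column, identifies $\br_e(\bphi)=(r_e^*(\bphi_1),r_e^*(\bphi_2))$ with two copies of the vector-valued lifting operator of \cite{arnold02}, and imports the two-sided norm equivalence stated there between (4.4) and (4.5). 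You instead prove both inequalities from the definition \eqref{locallift}: the upper bound by testing with $\br_e(\bphi)$ itself plus a discrete trace inequality, and the lower bound by constructing an explicit $\btau^*\in\bSigma_h$ supported on one adjacent element whose average on $e$ reproduces $-\bphi$, followed by scaling. Your argument is self-contained where the paper's is a reduction-plus-citation, and it has the merit of making visible exactly where the hypothesis enters: the lower bound needs $\llbracket\nabla v\rrbracket$ to lie in the trace space of $\bSigma_h$, i.e.\ the degree compatibility $l=1$ between the $P_2$ displacement space and $\bSigma_h$ (a point the paper leaves implicit in its appeal to the $[P_1(e)]^2$ case of \cite{arnold02}; note the lower bound would genuinely fail for $l=0$, since the lifting then sees only the mean of $\bphi$ over $e$). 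The paper's route buys brevity and defers the scaling work to the cited references; yours costs a few more lines but stands on its own and is the argument one would have to write out anyway to verify the citation applies to the symmetric-matrix-valued setting.
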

\begin{proof}
The second inequality was proved in \cite{huang10}. For $v\in
V\cap H^3(\Omega)$, $\llbracket\nabla v\rrbracket=0$ on $e\in {\cal
E}_h^0$.  So we only need to consider the case $v\in V_h$. By the
formula between (4.4) and (4.5) in \cite{arnold02}, we know
\begin{equation}
h^{-1}_e\|\bvarphi\|^2_{0,e}\lesssim
\|r^*_e(\bvarphi)\|^2_{0,\Omega}\lesssim
h^{-1}_e\|\bvarphi\|^2_{0,e}\quad \forall\,\bvarphi\in [P_1(e)]^2,
\end{equation}
where the lifting operator $r^*_e: (L^2(e))^2 \rightarrow W_h$ is defined by
\[\int_\Omega r^*_e(\bv)\cdot \bw_h\, dx = - \int_e \bv\cdot \{\bw_h\}\, ds, 
\quad \forall\, \bw_h\in W_h.\]
Here, $W_h:=\left\{\bw_h\in \left(L^2(\Omega)\right)^{2}:
\bw_{hi}|_K\in P_l(K),\ \forall\,K\in\mathcal{T}_h,i=1,2\right\}$.

For two matrix-valued functions $\bphi=(\phi_{ij})_{2\times 2}$ and
$\btau=(\tau_{ij})_{2\times 2}$, let
$\bphi_1=(\phi_{11},\phi_{21})^t$, $\bphi_2=(\phi_{12},\phi_{22})^t$,
$\btau_1=(\tau_{11},\tau_{21})^t$, $\btau_2=(\tau_{12},\tau_{22})^t$,
so that $\bphi=(\bphi_1,\bphi_2)$, $\btau=(\btau_1,\btau_2)$. Then
\begin{align*}
\int_\Omega \br_e(\bphi):\btau\,dx&=-\int_e\bphi:\{\btau\}\,ds
=-\int_e\bphi_1\cdot\{\btau_1\}\,ds-\int_e\bphi_2\cdot\{\btau_2\}\,ds\\
&=\int_\Omega r^*_e(\bphi_1)\cdot\btau_1\,dx+\int_\Omega
r^*_e(\bphi_2)\cdot\btau_2\,dx=\int_\Omega
(r^*_e(\bphi_1),r^*_e(\bphi_2)):\btau\,dx,
\end{align*}
for all $\btau\in \bsigma_h$. So $\br_e(\bphi)=(r^*_e(\bphi_1),r^*_e(\bphi_2))$,
$\|\br_e(\bphi)\|_{0,\Omega}^2=\|r^*_e(\bphi_1)\|_{0,\Omega}^2
+\|r^*_e(\bphi_2)\|_{0,\Omega}^2$, and
\begin{align*}
h^{-1}_e\|\bphi\|^2_{0,e}&=h^{-1}_e(\|\bphi_1\|^2_{0,e}+\|\bphi_2\|^2_{0,e})
\lesssim \|r^*_e(\bphi_1)\|_{0,\Omega}^2+\|r^*_e(\bphi_2)\|_{0,\Omega}^2
=\|\br_e(\bphi)\|_{0,\Omega}^2.
\end{align*}
Let $\bphi=\llbracket\nabla v\rrbracket$, then the first inequality follows.
\end{proof}

From (\ref{eq:jump}) and (\ref{lift_relatoin}), we have
\[\|\br_0(\llbracket\nabla
v\rrbracket)\|^2_{0,h}=
\|\sum_{e\in\mathcal{E}_h^0}\br_e(\llbracket\nabla
v\rrbracket)\|^2_{0,h}\leq 3 C_2
\sum_{e\in\mathcal{E}_h^0}h_e^{-1}\|\llbracket\nabla
v\rrbracket\|^2_{0,e}.\]

For the boundedness of the primal forms $B^{(j)}_{1,h}$ with
$j=1,\cdots,5$, first notice that $\|{\rm tr}(\btau)\|_{0,h}
\lesssim \|\btau\|_{0,h}$. By the Cauchy-Schwarz inequality and
Lemma \ref{lem:jump}, we get the following inequalities:
\begin{align}
&\int_\Omega\nabla^{2}_h w:\nabla_h^{2}v\,dx
\leq |w|_{2,h}|v|_{2,h},\label{1}\\
&\int_\Omega\br_0(\llbracket\nabla
w\rrbracket):\br_0(\llbracket\nabla v\rrbracket)\,dx \lesssim
\left(\sum_{e\in\mathcal{E}_h^0}h_e^{-1}\|\llbracket\nabla
w\rrbracket\|^2_{0,e}\right)^{1/2}
\left(\sum_{e\in\mathcal{E}_h^0}h_e^{-1}\|\llbracket\nabla
v\rrbracket\|^2_{0,e}\right)^{1/2},\label{2}\\
&\int_{\mathcal{E}_h^0}\eta h_e^{-1}\llbracket\nabla
w\rrbracket:\llbracket\nabla v\rrbracket\,ds \leq \sup_{e\in
\mathcal{E}_h^0} \eta_e
\left(\sum_{e\in\mathcal{E}_h^0}h_e^{-1}\|\llbracket\nabla
w\rrbracket\|^2_{0,e}\right)^{1/2}
\left(\sum_{e\in\mathcal{E}_h^0}h_e^{-1}\|\llbracket\nabla
v\rrbracket\|^2_{0,e}\right)^{1/2},\label{3}\\
&\sum_{e\in \mathcal{E}_h^0}\int_{\Omega}\eta\,
\br_e(\llbracket\nabla w\rrbracket):\br_e(\llbracket\nabla
v\rrbracket)\,dx \lesssim \sup_{e\in \mathcal{E}_h^0} \eta_e
\left(\sum_{e\in\mathcal{E}_h^0}h_e^{-1}\|\llbracket\nabla
w\rrbracket\|^2_{0,e}\right)^{1/2}
\left(\sum_{e\in\mathcal{E}_h^0}h_e^{-1}\|\llbracket\nabla
v\rrbracket\|^2_{0,e}\right)^{1/2}.\label{4}
\end{align}
Using the trace inequality $\|\nabla^2 v\|_{0,e}\lesssim h_e^{-1}|v|^2_{2,K}
+h_e|v|^2_{3,K}$ with $e$ an edge of $K$, we have
\begin{align}
\int_{\mathcal{E}_h^0}\llbracket \nabla
w\rrbracket:\{\nabla_h^2v\}\,ds & =\sum_{e\in{\cal
E}_h^0}\int_{e}\llbracket \nabla
w\rrbracket:\{\nabla_h^2v\}\,ds \nonumber\\
& \leq \left(\sum_{e\in{\cal E}_h^0}h_e^{-1}\|\llbracket\nabla w
 \rrbracket\|^2_{0,e}\right)^{1/2}\left(\sum_{e\in{\cal E}_h^0}h_e\|
 \{\nabla_h^2v\}\|^2_{0,e}\right)^{1/2}\nonumber\\
&\lesssim \left(\sum_{e\in{\cal E}_h^0}h_e^{-1}\|\llbracket \nabla
w\rrbracket\|^2_{0,e}\right)^{1/2}
\left(\sum_{K\in\mathcal{T}_h}(|v|_{2,K}^2+h^2_K|v|_{3,K}^2)\right)^{1/2}.\label{5}
\end{align}
The inequalities (\ref{1}) and (\ref{5}) are needed by all
bilinear forms. For the CDG methods with the bilinear form
$B_{1,h}^{(j)}$, $j=1, 2, 5$, the inequality (\ref{3}) is needed. The
inequality (\ref{2}) is needed by the formulas $B_{1,h}^{(j)}$ with
$j=3, 5$. The methods with the bilinear forms $B_{1,h}^{(j)}$,
$j=3, 4$, need the inequality (\ref{4}). Then we have the following result.

\begin{lemma}[Boundedness]
Let $B_{h}=B_{1,h}^{(j)}$ with $j=1,\cdots,5$. Then
\begin{align}
B_{h}(w,v)\lesssim &\interleave w\interleave \interleave
v\interleave \quad\quad\quad \forall\,(w,v)\in V(h)\times
V(h).\label{eq:boundedness}
\end{align}
\end{lemma}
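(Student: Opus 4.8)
The plan is to estimate each of the five primal forms $B_{1,h}^{(j)}$ termwise. Every such form is a finite sum of expressions of exactly the types already controlled in \eqref{1}--\eqref{5}, so the proof amounts to matching each term to the appropriate estimate and then recognizing every resulting factor as one of the three ingredients of the triple norm. To organize this, I would abbreviate $A(v)=|v|_{2,h}$, $J(v)=\big(\sum_{e\in\mathcal{E}_h^0}h_e^{-1}\|\llbracket\nabla v\rrbracket\|^2_{0,e}\big)^{1/2}$ and $H(v)=\big(\sum_{K\in\mathcal{T}_h}h_K^2|v|^2_{3,K}\big)^{1/2}$, so that $\interleave v\interleave^2=A(v)^2+J(v)^2+H(v)^2$ and hence each of $A(v)$, $J(v)$, $H(v)$ is bounded by $\interleave v\interleave$.

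First I would dispose of the two volume terms common to every $B_{1,h}^{(j)}$. The leading term is controlled by \eqref{1}, and the $\kappa$-weighted trace term is handled by the same Cauchy--Schwarz argument together with the pointwise bound $\|{\rm tr}(\btau)\|_{0,h}\lesssim\|\btau\|_{0,h}$; both are therefore $\lesssim A(w)A(v)$. Next, the two consistency/symmetry boundary integrals over $\mathcal{E}_h^0$ (with their $(1-\kappa)$ and $\kappa\,{\rm tr}$ pieces, the latter again absorbed via the trace bound) are exactly of the form estimated in \eqref{5}. Since the second factor appearing there equals $\big(A(v)^2+H(v)^2\big)^{1/2}\le\interleave v\interleave$, each such term is $\lesssim J(w)\interleave v\interleave$, and symmetrically $\lesssim J(v)\interleave w\interleave$. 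This is the only place where the $H^3$-seminorm genuinely enters, which is precisely why the theorem must be stated in the triple norm rather than in $|\cdot|_*$.

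The remaining contributions are the stabilization/penalty terms, which differ across the forms. For $j=1,2,5$ the penalty $\int_{\mathcal{E}_h^0}\eta h_e^{-1}\llbracket\nabla w\rrbracket:\llbracket\nabla v\rrbracket\,ds$ is bounded by \eqref{3}, giving $\lesssim \sup_e\eta_e\,J(w)J(v)$; for $j=3,4$ the lifting-based penalty $\sum_{e}\int_\Omega\eta\,\br_e(\cdot):\br_e(\cdot)$ is bounded by \eqref{4} together with the trace inequality, again yielding $\lesssim \sup_e\eta_e\,J(w)J(v)$; and the global lifting--lifting terms present in $j=3,5$ are bounded by \eqref{2} (and the trace bound), also $\lesssim J(w)J(v)$. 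Because $\{\eta_e\}_{e\in\mathcal{E}_h^0}$ has a uniform upper bound, all these constants are independent of $h$.

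Finally, adding the fixed number of terms for each form and using that each product $A(w)A(v)$, $J(w)J(v)$, and $J(w)\interleave v\interleave$ is $\le\interleave w\interleave\,\interleave v\interleave$, I obtain \eqref{eq:boundedness}. I do not anticipate a genuine obstacle here, as the estimates \eqref{1}--\eqref{5} already do the analytic work; the only points requiring care are keeping the $\kappa\,{\rm tr}$ terms under control uniformly through $\|{\rm tr}(\btau)\|_{0,h}\lesssim\|\btau\|_{0,h}$, and observing that the mixed factor produced by \eqref{5} must be absorbed into $\interleave v\interleave$ rather than $|v|_*$, which is the structural reason for the choice of norm.
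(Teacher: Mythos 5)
Your proposal is correct and follows essentially the same route as the paper: the paper's justification of the boundedness lemma consists precisely of the termwise matching of each contribution of $B_{1,h}^{(j)}$ to the estimates \eqref{1}--\eqref{5} (with \eqref{1} and \eqref{5} used by all forms, \eqref{3} for $j=1,2,5$, \eqref{2} for $j=3,5$, and \eqref{4} for $j=3,4$), combined with $\|{\rm tr}(\btau)\|_{0,h}\lesssim\|\btau\|_{0,h}$ and the observation that the factor $\big(\sum_{K}(|v|_{2,K}^2+h_K^2|v|_{3,K}^2)\big)^{1/2}$ from \eqref{5} is absorbed by $\interleave v\interleave$. No gaps.
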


For stability over $V_h$, note that $\interleave v\interleave=|v|_*$ for any
$v\in V_h$. Formulations $B_{1,h}^{(j)}$
and $B_{2,h}^{(j)}$ are equivalent on $V_h$, so we just need to
prove the stability for $B_{2,h}^{(j)}$ based on $|\cdot|_*$. We
use the Cauchy-Schwarz inequality and Lemma \ref{lem:jump} to get
\begin{align*}
B_{2,h}^{(1)}(v,v)=&(1-\kappa)\int_{\Omega}\nabla_h^2 v:\nabla_h^2 v
\,dx + \kappa \int_{\Omega} \left({\rm tr}(\nabla_h^2 v)\right)^2 dx
+ 2(1-\kappa)\int_{\Omega} \nabla_h^2 v:
\br_0(\llbracket\nabla v\rrbracket)\,dx\\
&{}+ 2\kappa \int_{\Omega} {\rm tr}\left(\nabla_h^2 v\right){\rm
tr}\left(\br_0(\llbracket\nabla v\rrbracket)\right) dx
+\int_{\mathcal{E}_h^0}\eta
h^{-1}_e|\llbracket\nabla v\rrbracket|^2 ds\\
\geq & (1-\kappa)|v|_{2,h}^2 + \kappa\|\Delta_h
v\|_{0,h}^2-(1-\kappa)\left(\epsilon|v|_{2,h}^2+\frac{1}{\epsilon}
\|\br_0(\llbracket\nabla v\rrbracket)\|_{0,h}^2\right)\\
&-\kappa\left(\|\Delta_h v\|_{0,h}^2+\|{\rm tr}\left(\br_0
(\llbracket \nabla v\rrbracket)\right)\|_{0,h}^2\right)+\eta_0
\sum_{e\in\mathcal{E}_h^0}h^{-1}_e\|\llbracket \nabla v\rrbracket\|^2_{0,e}\\
\geq&(1-\epsilon)(1-\kappa)|v|_{2,h}^2+\left(\eta_0-\frac{3(1-\kappa)C_2}{\epsilon}
-6C_2\kappa\right)\sum_{e\in\mathcal{E}_h^0}h^{-1}_e\|\llbracket\nabla
v \rrbracket\|^2_{0,e},
\end{align*}
where $0<\epsilon<1$ is a constant and $C_2$ is the generic positive
constant in (\ref{eq:jump}). Therefore, stability is valid for the $C^0$
IP method when
\[ \min_{e\in\mathcal{E}_h^0}\eta_e=\eta_0>3(1-\kappa)C_2+6C_2\kappa=3(1+\kappa)C_2. \]
Next,
\begin{align*}
B_{2,h}^{(2)}(v,v)=&\int_{\Omega}(1-\kappa)\nabla_h^2 v:\nabla_h^2 v
\,dx +\int_{\Omega}\kappa\,\left({\rm tr}(\nabla_h^2 v)\right)^2 dx
+\int_{\mathcal{E}_h^0}\eta h^{-1}_e\left(\llbracket\nabla
v\rrbracket\right)^2 ds\\
\geq&(1-\kappa)|v|_{2,h}^2+\eta_0\sum_{e\in\mathcal{E}_h^0}h^{-1}_e
 \|\llbracket \nabla v\rrbracket\|^2_{0,e}.
\end{align*}
So stability is valid for the $C^0$ NIPG method for any $\eta_0>0$.
This property is the reason why the method with the bilinear form
$B_{2,h}^{(2)}$ is useful even though $B_{2,h}^{(2)}$ is not
symmetric.
\begin{align*}
B_{2,h}^{(4)}(v,v)\geq&(1-\kappa)|v|_{2,h}^2 + \kappa \|\Delta_h
v\|_{0,h}^2 + 2(1-\kappa)\int_{\Omega} \nabla_h^2 v:
\br_0(\llbracket\nabla v\rrbracket)\,dx\\
&{}+ 2\kappa \int_{\Omega} \Delta_h v\,{\rm
tr}\left(\br_0(\llbracket\nabla v\rrbracket)\right)dx
+\eta_0\sum_{e\in\mathcal{E}_h^0}\left((1-\kappa)\|\br_e(\llbracket\nabla
v\rrbracket)\|^2_{0,h}+\kappa\|{\rm tr}(\br_e(\llbracket\nabla
v\rrbracket))\|^2_{0,h}\right)\\
\geq & (1-\kappa)|v|_{2,h}^2 + \kappa\|\Delta_h
v\|_{0,h}^2-(1-\kappa)\left(\epsilon|v|_{2,h}^2+\frac{1}{\epsilon}\|\br_0
(\llbracket\nabla v\rrbracket)\|_{0,h}^2\right)-\kappa\|\Delta_h v\|_{0,h}^2\\
&-\kappa\|{\rm tr}\left(\br_0(\llbracket\nabla v\rrbracket)
\right)\|_{0,h}^2+\eta_0C_1(1-\kappa)
\sum_{e\in\mathcal{E}_h^0}h^{-1}_e\|\llbracket \nabla
v\rrbracket\|^2_{0,e}+\eta_0\kappa \sum_{e\in\mathcal{E}_h^0}\|{\rm
tr}(\br_e(\llbracket\nabla v\rrbracket))\|^2_{0,h}\\
\geq&
(1-\epsilon)(1-\kappa)|v|_{2,h}^2+(1-\kappa)\left(\eta_0C_1-\frac{3C_2}{\epsilon}\right)
\sum_{e\in\mathcal{E}_h^0}h^{-1}_e\|\llbracket\nabla v\rrbracket\|^2_{0,e}\\
&{}+(\eta_0\kappa-3\kappa) \sum_{e\in\mathcal{E}_h^0}\|{\rm
tr}(\br_e(\llbracket\nabla v\rrbracket))\|^2_{0,h}.
\end{align*}
Since $C_2>C_1$, $\eta_0>3$ is guaranteed from $\eta_0>3C_2/C_1$.
Thus, stability is valid for this CDG formulation when
$\eta_0>3C_2/C_1$. For the method of Wells-Dung corresponding to the
form $B_{2,h}^{(3)}$ and the LCDG method corresponding to the form
$B_{2,h}^{(5)}$, stability can be analyzed by a similar argument
(cf.\ \cite{wells07} and \cite{huang10}, respectively), with
$\eta_0>0$.

Summarizing, we have shown the following result.

\begin{lemma}[Stability]\label{lem:sta}
Let $B_{h}=B_{2,h}^{(j)}$ with $j=1,\cdots,5$. Assume
\[ \min_{e\in{\cal E}^0_h}\eta_e>3\left(1+\kappa\right)C_2\ {\rm for\ }j=1 \]
and
\[ \min_{e\in{\cal E}^0_h}\eta_e>3\,C_2/C_1\ {\rm for\ }j=4, \]
with $C_1$ and $C_2$ the constants in the inequality \eqref{eq:jump}.  Then,
\begin{equation}
\interleave v\interleave^2 \lesssim  B_{h}(v,v)
\quad\forall\, v\in V_h. \label{eq:stability}
\end{equation}
\end{lemma}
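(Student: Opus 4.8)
The plan is to exploit the equivalence of $B_{1,h}^{(j)}$ and $B_{2,h}^{(j)}$ on $V_h$ and to argue entirely with the compact forms $B_{2,h}^{(j)}$, which fold the consistency and symmetry terms into the lifting operator $\br_0$. First I would record that for $v\in V_h$ the restriction $v|_K$ is a quadratic polynomial, so $\nabla^2 v$ is piecewise constant and $|v|_{3,K}=0$; hence $\interleave v\interleave = |v|_*$ and the target \eqref{eq:stability} reduces to bounding $|v|_{2,h}^2 + \sum_{e}h_e^{-1}\|\llbracket\nabla v\rrbracket\|_{0,e}^2$ from above by $B_{2,h}^{(j)}(v,v)$.

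For each $j$ I would expand $B_{2,h}^{(j)}(v,v)$ and separate three contributions: a coercive volume part $(1-\kappa)|v|_{2,h}^2 + \kappa\|\Delta_h v\|_{0,h}^2$, the nonnegative penalty part, and the indefinite cross terms coupling $\nabla_h^2 v$ with $\br_0(\llbracket\nabla v\rrbracket)$. The heart of the argument is controlling the cross terms. I would apply Cauchy--Schwarz followed by Young's inequality with a free parameter $0<\epsilon<1$, bounding $2(1-\kappa)\int_\Omega \nabla_h^2 v : \br_0(\llbracket\nabla v\rrbracket)\,dx$ below by $-(1-\kappa)\bigl(\epsilon|v|_{2,h}^2 + \tfrac{1}{\epsilon}\|\br_0(\llbracket\nabla v\rrbracket)\|_{0,h}^2\bigr)$, and then invoke Lemma \ref{lem:jump} together with \eqref{lift_relatoin} to dominate $\|\br_0(\llbracket\nabla v\rrbracket)\|_{0,h}^2 \le 3C_2\sum_e h_e^{-1}\|\llbracket\nabla v\rrbracket\|_{0,e}^2$. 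This converts the negative lifting term into a multiple of the penalty norm, so it can be absorbed by $\eta_0\sum_e h_e^{-1}\|\llbracket\nabla v\rrbracket\|_{0,e}^2$.

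The case analysis produces the stated thresholds. For $j=2$ (NIPG) the antisymmetric placement of the two lifting terms makes the cross terms cancel identically, so $B_{2,h}^{(2)}(v,v)\ge (1-\kappa)|v|_{2,h}^2 + \eta_0\sum_e h_e^{-1}\|\llbracket\nabla v\rrbracket\|_{0,e}^2$ and any $\eta_0>0$ works. For $j=1$ the absorption requires the penalty coefficient $\eta_0 - \frac{3(1-\kappa)C_2}{\epsilon} - 6C_2\kappa$ to be positive while keeping $(1-\epsilon)(1-\kappa)>0$; letting $\epsilon\to 1^-$ in the threshold (legitimate because the condition on $\eta_0$ is strict) yields $\eta_0 > 3(1-\kappa)C_2 + 6C_2\kappa = 3(1+\kappa)C_2$. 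For $j=4$ the penalty is built from the local liftings $\br_e$, so the lower bound in Lemma \ref{lem:jump}, namely $\|\br_e(\llbracket\nabla v\rrbracket)\|_{0,h}^2 \ge C_1 h_e^{-1}\|\llbracket\nabla v\rrbracket\|_{0,e}^2$, now enters and gives the condition $\eta_0 C_1 > 3C_2$, i.e. $\eta_0 > 3C_2/C_1$; since $C_2>C_1$ this also forces $\eta_0>3$, which controls the separate trace term $(\eta_0\kappa-3\kappa)\sum_e\|{\rm tr}(\br_e(\llbracket\nabla v\rrbracket))\|_{0,h}^2$. For $j=3$ (Wells) and $j=5$ (LCDG) the lifting $\br_0(\llbracket\nabla v\rrbracket)$ enters the principal part symmetrically and positively, so a reverse Young inequality combined with the nonnegative penalty yields coercivity for any $\eta_0>0$, following the arguments in \cite{wells07} and \cite{huang10}.

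The main obstacle I expect is the constant bookkeeping in the Young and lifting step: one must pick a single $\epsilon\in(0,1)$ that simultaneously keeps the coefficient $(1-\epsilon)(1-\kappa)$ of $|v|_{2,h}^2$ strictly positive and makes the penalty coefficient strictly positive at the stated threshold, so one cannot take $\epsilon=1$, and the strict inequalities on $\eta_0$ are exactly what supply the needed slack. A secondary subtlety is the trace cross term $2\kappa\int_\Omega \Delta_h v\,{\rm tr}(\br_0(\llbracket\nabla v\rrbracket))\,dx$: using $\|{\rm tr}(\btau)\|_{0,h}\lesssim\|\btau\|_{0,h}$ and $\|\Delta_h v\|_{0,h}\le \sqrt2\,|v|_{2,h}$ it is handled by the same device, but one must use Young's inequality with weight exactly one here so that the $\kappa\|\Delta_h v\|_{0,h}^2$ term is cancelled rather than over-subtracted, thereby preserving coercivity in the Laplacian.
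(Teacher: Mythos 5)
Your proposal is correct and follows essentially the same route as the paper: reduce to $|\cdot|_*$ on $V_h$, work with the compact forms $B_{2,h}^{(j)}$, control the cross terms by Cauchy--Schwarz/Young with a parameter $\epsilon$, and absorb $\|\br_0(\llbracket\nabla v\rrbracket)\|_{0,h}^2\le 3C_2\sum_e h_e^{-1}\|\llbracket\nabla v\rrbracket\|_{0,e}^2$ into the penalty, yielding exactly the thresholds $3(1+\kappa)C_2$ for $j=1$, $3C_2/C_1$ for $j=4$, and any $\eta_0>0$ for $j=2,3,5$ (the last two deferred to the cited references, as in the paper). The constant bookkeeping, including the unit-weight Young step that cancels $\kappa\|\Delta_h v\|_{0,h}^2$ and the use of the lower bound $C_1$ for the $\br_e$-based penalty in the $j=4$ case, matches the paper's argument.
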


We further conclude that the stability is also valid for $B_{1,h}^{(j)}$ with $j=1,\cdots,5$.

\section{Error analysis}\label{sec:4}
\setcounter{equation}0

We turn to an error estimation for the CDG methods. Write the error as
\[e = u - u_h = (u - u_I) + (u_I - u_h),\]
where $u_I \in V_h$ is the usual continuous piecewise quadratic interpolant
of the exact solution $u$. Using the scaling argument and the trace theorem, we
have the following result.

\begin{lemma}\label{lem:interpolation}
For all $v\in H^{3}(K)$ on $K\in\mathcal{T}_h$,
\begin{align*}
\|v-v_I\|_{K}+h_K\left|v-v_I\right|_{1,K}+h_K^2\left|v-v_I\right|_{2,K}&\lesssim
h^{3}_K|v|_{3,K}, \\
\left\|\nabla \left(v-v_I\right)\right\|_{0,\partial K}&\lesssim
h^{3/2}_K|v|_{3,K}.
\end{align*}
\end{lemma}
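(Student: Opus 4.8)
The plan is to reduce everything to a fixed reference triangle $\hat K$ by the standard affine scaling argument and then invoke the Bramble--Hilbert lemma. Let $F_K(\hat x) = B_K \hat x + b_K$ be the affine map carrying $\hat K$ onto $K$; under the minimal angle (shape-regularity) condition one has $\|B_K\| \lesssim h_K$, $\|B_K^{-1}\| \lesssim h_K^{-1}$, and $|\det B_K| \sim h_K^2$. For $w$ defined on $K$ write $\hat w = w \circ F_K$. The usual seminorm scaling relations then read $|w|_{m,K} \lesssim h_K^{1-m}\,|\hat w|_{m,\hat K}$ and $|\hat w|_{m,\hat K} \lesssim h_K^{m-1}\,|w|_{m,K}$ for $0 \le m \le 3$ in two space dimensions.

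On the reference element the quadratic nodal interpolant $\hat I$ is well defined on $H^3(\hat K)$, since $H^3(\hat K) \hookrightarrow C(\overline{\hat K})$ in two dimensions so that the nodal values make sense, and $\hat I$ is then a bounded linear operator. Crucially, $\hat I$ reproduces $P_2(\hat K)$, i.e.\ $\hat I \hat p = \hat p$ for every $\hat p \in P_2(\hat K)$, and since $|\cdot|_{3,\hat K}$ annihilates $P_2(\hat K)$, the Bramble--Hilbert lemma yields, for $m = 0,1,2$,
\[
|\hat v - \hat I \hat v|_{m,\hat K} \lesssim |\hat v|_{3,\hat K}, \qquad \|\hat v - \hat I \hat v\|_{2,\hat K} \lesssim |\hat v|_{3,\hat K}.
\]
Because nodal interpolation commutes with the affine pullback, $\widehat{v_I} = \hat I \hat v$, so $v - v_I$ pulls back to $\hat v - \hat I \hat v$. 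Transferring back through the scaling relations gives $|v - v_I|_{m,K} \lesssim h_K^{1-m}|\hat v - \hat I \hat v|_{m,\hat K} \lesssim h_K^{1-m}|\hat v|_{3,\hat K} \lesssim h_K^{3-m}|v|_{3,K}$; taking $m = 0,1,2$ and multiplying by the indicated powers of $h_K$ produces the three terms of the first estimate.

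For the boundary estimate I would combine the trace theorem on $\hat K$ with the above. The trace inequality gives $\|\hat\nabla(\hat v - \hat I \hat v)\|_{0,\partial \hat K} \lesssim \|\hat v - \hat I \hat v\|_{2,\hat K} \lesssim |\hat v|_{3,\hat K}$. Since the gradient introduces a factor $\|B_K^{-1}\| \sim h_K^{-1}$ while the arclength element scales like $h_K$, one has $\|\nabla(v-v_I)\|_{0,\partial K} \sim h_K^{-1/2}\|\hat\nabla(\hat v - \hat I \hat v)\|_{0,\partial\hat K} \lesssim h_K^{-1/2}|\hat v|_{3,\hat K} \lesssim h_K^{3/2}|v|_{3,K}$, which is the second estimate.

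The only subtlety — and it is bookkeeping rather than a conceptual obstacle — is keeping the powers of $h_K$ straight across the affine change of variables, especially for the boundary term, where the surface measure scales as $h_K\,d\hat s$ rather than $h_K^2\,d\hat x$. The Sobolev embedding $H^3(\hat K) \hookrightarrow C(\overline{\hat K})$ needed to make $\hat I$ well defined is the one place where the two-dimensionality of $\Omega$ enters.
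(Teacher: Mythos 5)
Your argument is correct and is exactly the route the paper takes: the paper offers no written proof beyond the remark that the lemma follows ``using the scaling argument and the trace theorem,'' and your affine pullback to the reference triangle, Bramble--Hilbert via the $P_2$-reproduction of the nodal interpolant, and the trace inequality on $\hat K$ fill in precisely those steps with the correct powers of $h_K$ (including the $h_K^{-1}$ from the gradient versus $h_K$ from the arclength element in the boundary estimate).
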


As a consequence of Lemma \ref{lem:interpolation}, we obtain the estimate
\begin{align}\label{est_inter}
\interleave u-u_I\interleave \lesssim h |u|_{3,\Omega}
\end{align}

Now, we are ready to derive a priori error estimates of the CDG methods 
when they are applied to solve the 4th-order elliptic variational inequality \eqref{EVI}.

\begin{theorem}\label{thm:1}
Assume the solution of the problem \eqref{EVI} satisfies $u\in H^{3}(\Omega)$
and the assumptions in Lemma \ref{lem:sta} hold.
Let $B_{h}=B_{h}^{(j)}$ with $j=1,\cdots,5$, and $u_h\in K_{h}$ be
the solution of \eqref{dcdg}. Then we have the optimal order error estimate
\begin{equation}
\interleave u-u_h\interleave\lesssim h\|u\|_{3,\Omega}.
\label{bound1}
\end{equation}
\end{theorem}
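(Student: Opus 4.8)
The plan is to follow the standard route for variational inequalities of the second kind, combining the stability, boundedness and consistency results proved above with the interpolation estimate \eqref{est_inter}, the only genuinely new work being the treatment of the nonsmooth friction functional $j$. Writing $e = u - u_h = (u - u_I) + (u_I - u_h)$ and using the triangle inequality with \eqref{est_inter}, which gives $\interleave u - u_I\interleave \lesssim h|u|_{3,\Omega}$, it suffices to estimate $\interleave u_I - u_h\interleave$. Setting $w_h := u_I - u_h \in V_h$, I would invoke the stability estimate \eqref{eq:stability} (valid for $B_{1,h}^{(j)}$ on $V_h$) to get $\interleave w_h\interleave^2 \lesssim B_h(w_h, w_h)$, and then split the first argument as
\[ B_h(w_h, w_h) = B_h(u_I - u, w_h) + B_h(u - u_h, w_h). \]
The first term is controlled by boundedness \eqref{eq:boundedness} together with \eqref{est_inter}, yielding $B_h(u_I - u, w_h) \lesssim h|u|_{3,\Omega}\,\interleave w_h\interleave$, so the whole difficulty is concentrated in $B_h(u - u_h, w_h)$.

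To bound $B_h(u - u_h, w_h)$ I would first extract the exact identity that sits one step before the final inequality in the proof of Lemma \ref{consisency}. Since $\llbracket \nabla u\rrbracket = 0$ on interior edges for $u \in H^3(\Omega)$, that same integration-by-parts chain (using \eqref{relation5}, \eqref{relation1} and \eqref{Lar2}) actually produces the equality
\[ B_h(u, v_h - u) = (f, v_h - u) - \int_{\Gamma_3} g\,\lambda\,v_h\,ds + j(u) \qquad \forall\, v_h \in V_h. \]
Applying this with $v_h = u_I$ and with $v_h = u_h$ and subtracting gives $B_h(u, u_I - u_h) = (f, u_I - u_h) - \int_{\Gamma_3} g\,\lambda\,(u_I - u_h)\,ds$. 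On the other hand, testing the discrete inequality \eqref{dcdg} with $v_h = u_I$ gives $B_h(u_h, u_I - u_h) \ge (f, u_I - u_h) - j(u_I) + j(u_h)$. Subtracting the two and writing $B_h(u - u_h, u_I - u_h) = B_h(u, u_I - u_h) - B_h(u_h, u_I - u_h)$, I obtain
\[ B_h(u - u_h, u_I - u_h) \le j(u_I) - j(u_h) - \int_{\Gamma_3} g\,\lambda\,(u_I - u_h)\,ds. \]

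The remaining, and crucial, point is that this right-hand side is of higher order. Using $g \ge 0$ and $\lambda u_h \le |u_h|$ (which follows from $|\lambda| \le 1$) the contribution of $u_h$ is nonpositive, so the bound reduces to $\int_{\Gamma_3} g\,(|u_I| - \lambda u_I)\,ds$; then substituting $\lambda u_I = |u| + \lambda(u_I - u)$ (via $\lambda u = |u|$) and using $\big||u_I| - |u|\big| \le |u_I - u|$ together with $|\lambda| \le 1$ collapses everything to $2\int_{\Gamma_3} g\,|u - u_I|\,ds$. A trace inequality combined with Lemma \ref{lem:interpolation} gives $\|u - u_I\|_{0,\Gamma_3} \lesssim h^{5/2}|u|_{3,\Omega}$, so by Cauchy--Schwarz $B_h(u - u_h, u_I - u_h) \lesssim h^{5/2}|u|_{3,\Omega}$. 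Collecting the two contributions yields
\[ \interleave w_h\interleave^2 \lesssim h\,|u|_{3,\Omega}\,\interleave w_h\interleave + h^{5/2}|u|_{3,\Omega}, \]
and Young's inequality absorbs the first term on the right to give $\interleave w_h\interleave^2 \lesssim h^2|u|_{3,\Omega}^2 + h^{5/2}|u|_{3,\Omega}$; since the contact contribution is of higher order in $h$, the triangle inequality then delivers \eqref{bound1}. I expect the main obstacle to be exactly this last stage: the nonsmooth $j$ and the multiplier $\lambda$ must be handled jointly, and the key observation is that the combination $j(u_I) - j(u_h) - \int_{\Gamma_3} g\lambda(u_I - u_h)$ collapses, thanks to $|\lambda| \le 1$ and $\lambda u = |u|$, to the single interpolation quantity $\int_{\Gamma_3} g|u - u_I|$, whose trace estimate is of order $h^{5/2}$ and therefore does not spoil the optimal $O(h)$ rate.
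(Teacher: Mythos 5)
Your proposal is correct and follows essentially the same route as the paper's own proof: the same splitting $B_h(u_I-u_h,u_I-u_h)=B_h(u_I-u,\cdot)+B_h(u-u_h,\cdot)$, the same consistency identity obtained by integration by parts together with \eqref{relation1}, \eqref{relation5} and \eqref{Lar2}, and the same collapse of $j(u_I)-j(u_h)-\int_{\Gamma_3}g\,\lambda\,(u_I-u_h)\,ds$ to $2\int_{\Gamma_3}g\,|u-u_I|\,ds$ via $|\lambda|\le 1$ and $\lambda u=|u|$. The only (harmless) difference is that you track the trace of the interpolation error as $O(h^{5/2})$ where the paper simply records $O(h^{2})$; both are of higher order than needed for the optimal $O(h)$ rate.
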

\begin{proof}
Recall the boundedness and stability of the bilinear form $B_{h}$. We have
\begin{equation}
\interleave u_I-u_h\interleave^2 \lesssim
B_{h}(u_I-u_h,u_I-u_h)\equiv T_1+T_2, \label{err1}
\end{equation}
where
\begin{align*}
T_1 &= B_{h}(u_I-u,u_I-u_h),\\
T_2&= B_{h}(u-u_h,u_I-u_h).
\end{align*}
We bound $T_1$ as follows:
\begin{equation}
  T_1 \lesssim\interleave u_I-u\interleave \interleave u_I-u_h\interleave
   \lesssim \epsilon \interleave u_I-u_h\interleave^2
    +\frac{1}{4\epsilon}\interleave u_I-u\interleave^2,
\label{err2}
\end{equation}
where $\epsilon > 0$ is an arbitrarily small number.

Since $\llbracket\nabla u\rrbracket =0$ on $e \in {\cal E}_h^i$, we
use the definition (\ref{moment}) to obtain
\begin{align*}
B_{h}(u,u_I-u_h) =&\int_{\Omega}(1-\kappa)\nabla^{2}u:\nabla_h^{2}
(u_I-u_h)\, dx +\int_{\Omega}\kappa\,{\rm
tr}\left(\nabla^{2}u\right) {\rm
tr}\left(\nabla_h^{2}(u_I-u_h)\right) dx\\
{} &-\int_{\mathcal{E}_h^0}\llbracket\nabla (u_I-u_h)\rrbracket:
\left((1-\kappa)\nabla^2u
 +\kappa\,{\rm tr}\left(\nabla^2u\right)\bI
 \right)ds\\
=&-\sum_{K\in\mathcal{T}_{h}}\int_{K}\bsigma:\nabla^{2}
(u_I-u_h)\,dx+\int_{\mathcal{E}_{h}^0}\llbracket\nabla
(u_I-u_h)\rrbracket:\bsigma\,ds.
\end{align*}
Noting $[\bsigma]=\bzero$ on $e\in{\cal E}_h^i$, we get by Lemma \ref{identities},
\begin{align*}
\sum_{K\in\mathcal{T}_{h}}\int_{K}\bsigma:\nabla^{2}(u_I-u_h)\,dx
&=-\sum_{K\in\mathcal{T}_{h}}\int_{K}\nabla(u_I-u_h)\cdot(\nabla\cdot \bsigma)\,dx\\
&{}\quad +\sum_{K\in\mathcal{T}_{h}}\int_{\partial K}\nabla(u_I-u_h)\cdot(\bsigma \bn_K)\,ds\\
&=-\int_{\Omega}\nabla(u_I-u_h)\cdot(\nabla\cdot\bsigma)\,dx
  +\int_{\mathcal{E}_{h}}\llbracket\nabla(u_I-u_h)\rrbracket:\bsigma\,ds.
\end{align*}
Thus,
\begin{align*}
B_{h}(u,u_I-u_h)= &\int_{\Omega}\nabla(u_I-u_h)\cdot(\nabla\cdot\bsigma)\,dx
-\int_{\Gamma_2\cup\Gamma_3}\llbracket\nabla (u_I-u_h)\rrbracket:\bsigma\,ds\\
= & \int_{\Omega}\nabla(u_I-u_h)\cdot(\nabla\cdot\bsigma)\,dx 
-\int_{\Gamma_2\cup \Gamma_3} \sigma_\tau\ \partial_\tau (u_I - u_h)\,ds\\
= & - \int_\Omega \nabla\cdot(\nabla\cdot\bsigma) (u_I-u_h)\,dx 
+ \langle (\nabla\cdot\bsigma)\cdot\bnu,u_I-u_h \rangle_{1/2,\Gamma}\\
&-\int_{\Gamma_2\cup \Gamma_3} \sigma_\tau \partial_\tau (u_I - u_h)\,ds.
\end{align*}

By \eqref{relation5} and \eqref{relation1}, we have
\begin{align}
B_{h}(u,u_I-u_h)=& -\int_{\Omega}\nabla\cdot(\nabla\cdot\bsigma)(u_I-u_h)\,dx
-\int_{\Gamma_3}g\ \lambda\ u_I\,ds + \int_{\Gamma_3}g\ \lambda\ u_h\,ds \nonumber\\
=&\int_{\Omega}f(u_I-u_h)\,dx -\int_{\Gamma_3}g\ \lambda\ u_I\,ds + \int_{\Gamma_3}g\ \lambda\ u_h\,ds. \label{err3}
\end{align}
Let $v_h = u_I$ in (\ref{dcdg}),
\begin{equation}\label{err4}
  B_{h}(u_h,u_I-u_h) + j(u_I) - j(u_h) \ge (f,u_I-u_h).
\end{equation}

Combining (\ref{err4}) and (\ref{err3}), and with the use of \eqref{Lar2}, we can
bound $T_2=B_h(u-u_h,u_I-u_h)$ as follows:
\begin{align*}
T_2 &\le -\int_{\Gamma_3}g\ \lambda\ u_I\,ds + \int_{\Gamma_3}g\ \lambda\ u_h\,ds
+ j(u_I) - j(u_h)\\
&= \int_{\Gamma_3}g (|u_I|-\lambda\ u_I)\,ds + \int_{\Gamma_3}g (\lambda\ u_h - |u_h|)\,ds\nonumber\\
&\le \int_{\Gamma_3}g (|u_I|-\lambda\ u_I)\,ds = \int_{\Gamma_3}g (|u_I| - |u| + \lambda\ u -\lambda\ u_I)\,ds \\
&\le 2\int_{\Gamma_3}g\ |u_I - u|\, ds \le 2\|g\|_{0,\Gamma_3}\|u_I - u \|_{0,\Gamma_3} .
\end{align*}
Hence,
\begin{equation}
T_2\lesssim h^{2}\|u\|_{3,\Omega}
\label{err5}
\end{equation}

Combining \eqref{err1}, \eqref{err2}, and (\ref{err5}), and
applying Lemma \ref{lem:interpolation}, we have
\begin{equation}\label{uiuh}
\interleave u_I-u_h\interleave^2 \lesssim h^{2}
\|u\|_{3,\Omega}^2.
\end{equation}
Finally, from the triangle inequality $\interleave u-u_h\interleave
\le \interleave u-u_I\interleave +\interleave u_I-u_h\interleave$,
\eqref{est_inter} and (\ref{uiuh}), we obtain the error bound.
\end{proof}

\section{Numerical Results}
\setcounter{equation}0

In this section, we present a numerical example with the five CDG schemes studied
in solving the elliptic variational inequality \eqref{EVI}. Let
$\Omega=(-1,1)\times(-1,1)$, $\kappa=0.3$. A generic point in $\overline{\Omega}$
is denoted as $\mathbf{x}=(x,y)^T$.  The Dirichlet boundary is
$\Gamma_1=(-1,1)\times\{ 1\}$, and the free boundary is
$\Gamma_2=\{\{-1\}\times (-1,1)\} \cup \{\{1\}\times (-1,1)\}$.
On the friction boundary $\Gamma_3=(-1,1)\times\{-1\}$, we choose $g = 1$.
The right hand side function is $f(\mathbf{x}) = 24(1-x^2)^2+24(1-y^2)^2+32(3x^2-1)(3y^2-1)$.

For a discretization of the variational inequality \eqref{EVI}, we use uniform
triangulations $\{\mathcal{T}_h\}$ of
the region $\overline{\Omega}$, and define the finite element spaces to be
\begin{align*}
V_h&:=\{v_h\in H^1_{\Gamma_1}(\Omega):v_h|_K\in P_2(K)\ \forall\, K\in\mathcal{T}_h\},\\
\bSigma_h&:=\left\{\btau_h\in \left(L^2(\Omega)\right)_s^{2\times2}:
\tau_{h,ij}|_K\in P_1(K)\ \forall\,K\in\mathcal{T}_h,\ i,j=1,2\right\}.
\end{align*}
Any function $v^h\in V_h$ can be expressed as
\[ v^h(\mathbf{x})=\sum v_i\phi_i(\mathbf{x}), \]
where $v_i=v^h(\mathbf{x}_i)$, $\{\mathbf{x}_i\}$ are the nodal points, and $\{\phi_i\}$
are the standard nodal basis functions of the space $V_h$.  The basis functions
satisfy the relation $\phi_i(\mathbf{x}_j) =\delta_{ij}$, $\delta_{ij}$ being the Kronecher delta.
The functional $j(\cdot)$ is approximated through numerical integration:
\[ j_h(v^h)=S^{\Gamma_3}_n(g\,|v^h|)=\sum w_j g(\mathbf{x}_j)|\sum v_i\phi_i(\mathbf{x}_j)|
  =\sum |w_jg(\mathbf{x}_j)\,v_j|, \]
where the summations extend to all the finite element nodes on $\overline{\Gamma_3}$,
and $S^{\Gamma_3}_n$ denotes the composite Simpson's rule using these finite element nodes.
Then the discrete problem is
\begin{equation}
\min_{u^h\in V^h}\frac{1}{2}\,a(u^h,u^h)+j_h(u^h)-(f,u^h).\label{DOPT}
\end{equation}
The matrix/vector form of the discrete optimization problem is
\begin{equation}
\min_\mathbf{u}\frac{1}{2}\mathbf{u}^T\mathbf{Au}+\|\mathbf{Bu}\|_{\ell_1}-\mathbf{u}^T\mathbf{f},
\label{MOPT}
\end{equation}
where $\mathbf{u}=(u_i)^T$, $\mathbf{A}=\left(a(\phi_i,\phi_j)\right)$,
$\mathbf{B}=\left(w_ig(\mathbf{x}_i)\delta_{ij}\right)$, and $\mathbf{f}=\left((f,\phi_j)\right)^T$.

To solve the discrete problem \eqref{MOPT}, we use the following primal-dual fixed point iteration Algorithm \ref{alg1} proposed in \cite{zhang11}. 
\begin{algorithm}
\caption{Primal Dual Fixed Point Algorithm}
\label{alg1}
\begin{algorithmic}
\STATE {Initialize $\mathbf{u}_0$ and $\mathbf{v}_0$, set parameters $\lambda\in\interval({0,\frac{1}{\lambda_{\max}(BB^T)}}],\gamma\in\interval({0,\frac{2}{\|A\|_2}})$}
\FOR{$i=1,2,3,\cdots$}
\STATE $\mathbf{u}_{k+\frac{1}{2}}=\mathbf{u}_k-\gamma(\mathbf{Au}_k-\mathbf{f})$,
\STATE $\mathbf{v}_{k+1}=(I-\mbox{prox}_{\frac{\gamma}{\lambda}\|\cdot\|_1})(\mathbf{Bu}_{k+\frac{1}{2}}+(I-\lambda \mathbf{BB}^T)\mathbf{v}_k)$,
\STATE $\mathbf{u}_{k+1}=\mathbf{u}_{k+\frac{1}{2}}-\lambda\mathbf{B}^T\mathbf{v}_{k+1}$
\ENDFOR
\end{algorithmic}
\end{algorithm}
Here for a given function $F$ of a vector variable $\mathbf{x}$,
the proximal operator $\mbox{prox}_F$ is defined as
\[\mbox{prox}_F(\mathbf{x})=\arg\min_{\mathbf{y}} F(\mathbf{y})+\frac{1}{2}\|\mathbf{x}-\mathbf{y}\|^2_2.\]
For $F=\frac{\gamma}{\lambda}\|\cdot\|_1$, the proximal operator has the explicit form (applied
to each component of the vector variable):
\[\mbox{prox}_{\frac{\gamma}{\lambda}\|\cdot\|_1}x
=\mbox{sgn}(x)\,\max\!\left(|x|-\frac{\gamma}{\lambda},0\right)
=\mbox{sgn}(x)\left(|x|-\frac{\gamma}{\lambda}\right)_+.\]

Tables \ref{table:1}--\ref{table:5} provide numerical solution errors in
the energy norm $\interleave \cdot\interleave$ and $H^1(\Omega)$ seminorm for the five DG methods
discussed in this paper.   Since the true solution of the variational inequality \eqref{EVI}
is not known, we use the numerical solution corresponding to the meshsize
$h=1/64$ as the true solution in computing the errors.  We observe that the numerical
convergence orders in the energy norm are around one, agreeing with the
theoretical error estimate \eqref{bound1}.  We note that the numerical
convergence orders in the $H^1(\Omega)$-seminorm are also close to one.

\begin{table}[H]
\caption{Error for $C^0$ IP method \eqref{form1b}}
\begin{center}
\begin{tabular}{|c|c|c|c|c|c|c|}
\hline
\multirow{2}{*}{$h$} &
\multicolumn{3}{c|}{$\interleave u-u_h\interleave$} &
\multicolumn{3}{c|}{$|u-u_h|_{H^1(\Omega)}$} \\
\cline{2-7} & $\eta=1$ & $\eta=10$ & $\eta=100$  & $\eta=1$ & $\eta=10$ & $\eta=100$ \\ \hline
1/2 & $5.1859$ & $4.2973$ & $3.1164$ & $0.9367$ & $0.8126$ & $0.5635$\\ \hline
1/4 & $3.3677$ & $2.6726$ & $1.5923$ & $0.5105$ & $0.4135$ & $0.3133$\\ \hline
1/8 & $1.8625$ & $1.4407$ & $0.8122$ & $0.2691$ & $0.2049$ & $0.1580$\\ \hline
1/16 & $0.8601$ & $0.7652$ & $0.4422$ & $0.1355$ & $0.1059$ & $0.0801$\\ \hline
\end{tabular}
\end{center}
\label{table:1}
\end{table}

\begin{table}[H]
\caption{Error for NIPG method \eqref{form2b}}
\begin{center}
\begin{tabular}{|c|c|c|c|c|c|c|}
\hline
\multirow{2}{*}{$h$} &
\multicolumn{3}{c|}{$\interleave u-u_h\interleave$} &
\multicolumn{3}{c|}{$|u-u_h|_{H^1(\Omega)}$} \\
\cline{2-7} & $\eta=1$ & $\eta=10$ & $\eta=100$  & $\eta=1$ & $\eta=10$ & $\eta=100$ \\ \hline
1/2 & $5.5411$ & $4.4659$ & $3.1178$ & $1.0000$ & $0.8250$ & $0.6638$\\ \hline
1/4 & $3.6029$ & $2.7708$ & $1.6071$ & $0.5699$ & $0.4233$ & $0.3607$\\ \hline
1/8 & $1.9137$ & $1.5359$ & $0.7961$ & $0.2829$ & $0.2246$ & $0.1853$\\ \hline
1/16 & $0.9485$ & $0.7594$ & $0.3929$ & $0.1491$ & $0.1144$ & $0.0934$\\ \hline
\end{tabular}
\end{center}
\label{table:2}
\end{table}

\begin{table}[H]
\caption{Error for Wells-Dung DG formulation \eqref{form3b}}
\begin{center}
\begin{tabular}{|c|c|c|c|c|c|c|}
\hline
\multirow{2}{*}{$h$} &
\multicolumn{3}{c|}{$\interleave u-u_h\interleave$} &
\multicolumn{3}{c|}{$|u-u_h|_{H^1(\Omega)}$} \\
\cline{2-7} & $\eta=1$ & $\eta=10$ & $\eta=100$  & $\eta=1$ & $\eta=10$ & $\eta=100$ \\ \hline
1/2 & $4.4617$ & $3.4573$ & $2.0932$ & $0.8062$ & $0.7650$ & $0.3785$\\ \hline
1/4 & $2.8185$ & $2.2331$ & $1.3618$ & $0.4301$ & $0.3885$ & $0.2486$\\ \hline
1/8 & $1.4545$ & $1.1473$ & $0.6794$ & $0.2131$ & $0.2035$ & $0.1253$\\ \hline
1/16 & $0.7322$ & $0.6270$ & $0.3832$ & $0.1085$ & $0.1036$ & $0.0645$\\ \hline
\end{tabular}
\end{center}
\label{table:3}
\end{table}

\begin{table}[H]
\caption{Error for Baker-DG formulation \eqref{form4b}}
\begin{center}
\begin{tabular}{|c|c|c|c|c|c|c|}
\hline
\multirow{2}{*}{$h$} &
\multicolumn{3}{c|}{$\interleave u-u_h\interleave$} &
\multicolumn{3}{c|}{$|u-u_h|_{H^1(\Omega)}$} \\
\cline{2-7} & $\eta=1$ & $\eta=10$ & $\eta=100$  & $\eta=1$ & $\eta=10$ & $\eta=100$ \\ \hline
1/2 & $4.8538$ & $4.1180$ & $2.0977$ & $0.8771$ & $0.8360$ & $0.3793$\\ \hline
1/4 & $2.8524$ & $2.4987$ & $1.3632$ & $0.4662$ & $0.4625$ & $0.2489$\\ \hline
1/8 & $1.5067$ & $1.2842$ & $0.6765$ & $0.2447$ & $0.2409$ & $0.1255$\\ \hline
1/16 & $0.7629$ & $0.6747$ & $0.3842$ & $0.1259$ & $0.1212$ & $0.0657$\\ \hline
\end{tabular}
\end{center}
\label{table:4}
\end{table}

\begin{table}[H]
\caption{Error for LCDG method \eqref{form5b}}
\begin{center}
\begin{tabular}{|c|c|c|c|c|c|c|}
\hline
\multirow{2}{*}{$h$} &
\multicolumn{3}{c|}{$\interleave u-u_h\interleave$} &
\multicolumn{3}{c|}{$|u-u_h|_{H^1(\Omega)}$} \\
\cline{2-7} & $\eta=1$ & $\eta=10$ & $\eta=100$  & $\eta=1$ & $\eta=10$ & $\eta=100$ \\ \hline
1/2 & $4.6407$ & $4.2599$ & $2.5863$ & $0.8384$ & $0.7696$ & $0.5579$\\ \hline
1/4 & $2.8265$ & $2.2147$ & $1.6213$ & $0.4546$ & $0.4111$ & $0.2863$\\ \hline
1/8 & $1.5011$ & $1.2460$ & $0.8669$ & $0.2311$ & $0.2301$ & $0.1453$\\ \hline
1/16 & $0.7517$ & $0.6341$ & $0.4705$ & $0.1189$ & $0.1186$ & $0.0812$\\ \hline
\end{tabular}
\end{center}
\label{table:5}
\end{table}

\end{document}